\newtheorem{theorem}{Theorem}
\newtheorem{remark}{Remark}
\newtheorem{example}{Example}
\newtheorem{lemma}{Lemma}
\newtheorem{definition}{Definition}
\newtheorem{corollary}{Corollary}
\begin{document}
%
% paper title
% can use linebreaks \\ within to get better formatting as desired
% Do not put math or special symbols in the title.
\title{A novel perspective to gradient method:\\
 the fractional order approach}
%Time Domain Response of the Nabla Discrete Fractional Order System

\author{Yuquan~Chen,~%\IEEEmembership{Member,~IEEE,}
        %Bin~Du,~%\IEEEmembership{Student Member,~IEEE,}
        Yiheng~Wei,~%\IEEEmembership{Student Member,~IEEE,}
        %Songsong~Cheng,~
		and~Yong~Wang,~\IEEEmembership{Senior Member,~IEEE,}
\thanks{*The work described in this paper was fully supported by the National Natural Science Foundation of China (No. 61573332) and the Fundamental Research Funds for the Central Universities (No. WK2100100028).}
\thanks{All the authors are with the Department of Automation, University of Science and Technology of China, Hefei 230027, China (E-mail: cyq@mail.ustc.edu.cn; neudawei@ustc.edu.cn; yongwang@ustc.edu.cn).}
%iniesdu@mail.ustc.edu.cn; sscheng@mail.ustc.edu.cn;
%\thanks{Manuscript received April 19, 2005; revised December 27, 2012.}
}
%(E-mail: neudawei @ustc.edu.cn; sscheng@mail.ustc.edu.cn; iniesdu@mail.ustc.edu.cn; yongwang@ustc.edu.cn)

%\markboth{preprint submitted to IEEE Transactions on Automatic Control}%
%{Y. H. Wei \MakeLowercase{\textit{et al.}}: Bare Demo of IEEEtran.cls for Journals}

\maketitle

\begin{abstract}
In this paper, we give some new thoughts about the classical gradient method (GM) and recall the proposed fractional order gradient method (FOGM). It is proven that the proposed FOGM holds a super convergence capacity and a faster convergence rate around the extreme point than the conventional GM. The property of asymptotic convergence of conventional GM and FOGM is also discussed. To achieve both a super convergence capability and an even faster convergence rate, a novel switching FOGM is proposed. Moreover, we extend the obtained conclusion to a more general case by introducing the concept of $p$-order Lipschitz continuous gradient and $p$-order strong convex. Numerous simulation examples are provided to validate the effectiveness of proposed methods.
\end{abstract}

\begin{IEEEkeywords}
Fractional order gradient method, Lipschitz continuous gradient, Strong convex
\end{IEEEkeywords}
\IEEEpeerreviewmaketitle

\section{Introduction}
 Fractional order calculus is a natural generalization of classical integer order calculus, which has developed for about three hundred years. Yet, it only developed as a pure mathematics due to the lack of physical meaning. Recently, it has brought a new avenue to the development to all kinds of fields, such as automatic control and system modeling. With the rapid growth of data, it is emergent to find an efficient method for signal processing and optimization. It is sure that fractional order calculus also brings new perspectives in developing new optimization algorithms that we mainly concern in this paper.

As a standard optimization algorithm, GM has been widely used in many engineering applications like adaptive filter \citep{vaudrey2003stability,lin2008new}, image processing \cite{kretschmer1978improved,glover1979high,balla2013fast}, system identification \citep{wong1999hybrid,angulo2015nonlinear,lin2015parameter}, iterative learning, and computation intelligence. The linear convergence rate is rigorously proven under the assumption that the function is strong convex in \cite{boyd2004convex}. However, the convergence rate around the extreme point is quite slow, which is undesired. To overcome the slow convergence rate, Newton algorithm is proposed, which modified the iterative direction at each step by multiplying the inverse Hessian matrix. Yet, Newton algorithm only suits for a strong convex function and the computation cost is quite huge, which restrict its usage a lot. Besides, the choice of step size is also a big problem in GM. In \cite{boyd2004convex}, the choice of step size is discussed under the strong convex condition, which gives the range of step size.

As pointed before, fractional order calculus may bring a new chance for GM such as improving the convergence rate around the extreme point and behaving a robust convergence capacity to step size. Yet, research of FOGMs is still in its infancy and deserves further investigation. In \cite{tan2015novel}, the authors proposed an FOGM by using Caputo's fractional order derivative with a gradient order no more than $1$ as the iteration direction, instead of an integer order derivative. It was found that a smaller weight noise can be achieved if a smaller gradient order is used, and the algorithm converges faster if a bigger gradient order is used. A similar idea can be found in \cite{pu2015fractional} where a different Riemann Liouville's fractional order derivative was used to develop a fractional steepest descent method. However, the algorithm in \cite{pu2015fractional} cannot guarantee the convergence to the exact extreme point. This shortcoming has been well overcome in \cite{chaudhary2015identification}. Despite some minor errors in using the Leibniz rule, the method developed in \cite{chaudhary2015identification} has been successfully applied in speech enhancement \cite{geravanchizadeh2014speech} and noise suppression \cite{shah2014fractional}.

%Applications of FOGM to solving a LMS optimization problem were provided in \cite{chaudhary2015identification}. The authors add an additional Riemann-Liouville's fractional order derivative to the conventional GM, which can guarantee the convergence to the exact extreme point. Despite some minor errors with the calculation procedure, the method developed in \cite{chaudhary2015identification} has been successfully applied in speech enhancement \cite{geravanchizadeh2014speech} and noise suppression \cite{shah2014fractional}.

It is worth pointing out that most existing work on FOGM in literature concerns the problem of quadratic function optimization only and may not even guarantee the convergence to the extreme point. Thus we have proposed a novel FOGM for a general convex function, which can guarantee the convergence capability. Yet, there are no detailed analysis about the proposed FOGM. Thus in this study, we carefully analyze the properties of the proposed FOGM, including convergence capability, convergence accuracy, and convergence rate. Based on the obtained properties, a novel switching FOGM is further proposed, which shows a great convergence capability and a faster convergence rate. Moreover, with the concepts of $p$-order Lipschitz continuous gradient continuous and $p$-order strong convex being defined, all the conclusion are extended to a more general case. Besides all the obtained meaningful results which promote the development of FOGM, the results also give us some new thoughts about the conventional GM, of which the most important is that why strong convexity is always needed for analyzing.

The remainder of the article is organized as follows. Section \ref{sec2} gives some basic definitions about fractional order calculus and convex optimization. Some introduction of existing FOGMs are also presented in Section \ref{sec2}. Properties of proposed FOGM are discussed in Section \ref{sec4}. A novel switching FOGM is presented in Section \ref{sec8}. In Section \ref{sec9}, the conclusion are extended to a more general case. Some simulation examples are provided to demonstrate the effectiveness of the proposed methods in Section \ref{sec5}. The article is finally concluded in Section \ref{sec6}.
%It is worth pointing out that most existing work on FOGM in literature concerns the problem of quadratic function optimization only, due to the fact that fractional order derivative of a general function often has no explicit solutions. In addition, the convergence capability of the existing FOGMs depends on the initial integral point of fractional order derivative, and even the additional constant terms when using Riemann Liouville's fraction order derivative, which is undesirable in practice. Motivated by these issues, in this paper we will propose an alternative FOGM that can be used in searching for the extreme point of a general convex function, and has a guaranteed and faster convergence ability compared with the existing ones. An new iteration method is used in order to make the algorithm independent of the initial values. In particular, the FOGM using Riemann Liouville's fractional order derivative has been improved to be insensitive to any additional constant term. Moreover, we have considered a general case that the gradient order varies between 0 and 2, and we have provided discussion on how to select the gradient order in practice.
\section{Preliminaries and proposed FOGM}\label{sec2}
Recall the definition of Lipschitz continuous gradient.
\begin{definition}\cite{boyd2004convex}
For a scalar function $f(t)$ whose first order derivative is existing, there exists a scalar $\mu>0$ such that
\begin{eqnarray}
\left| {{f^{\left( 1 \right)}}\left( x \right) - {f^{\left( 1 \right)}}\left( y \right)} \right| \le \mu \left| {x - y} \right|,
\end{eqnarray}
for any $x$ and $y$ belonging to the definition domain of $f(t)$. Then $f(t)$ is said to satisfy Lipschitz continuous gradient.
\end{definition}
The definition of fractional order Lipschitz continuous gradient is given as follows.
\begin{definition}
For a scalar function $f(t)$ whose first order derivative is existing, there exists a scalar $\mu>0$ such that
\begin{eqnarray}
\left| {{f^{\left( 1 \right)}}\left( x \right) - {f^{\left( 1 \right)}}\left( y \right)} \right| \le \mu \left| {x - y} \right|^p,
\end{eqnarray}
for any $x$ and $y$ belonging to some region of the definition domain of $f(t)$. Then $f(t)$ is said to satisfy local $p$-order Lipschitz continuous gradient.
\end{definition}
\begin{definition}\cite{boyd2004convex}
For a scalar convex function $f(t)$ whose first order derivative is existing, there exists a scalar $\lambda>0$ such that
\begin{eqnarray}
\left| {{f^{\left( 1 \right)}}\left( x \right) - {f^{\left( 1 \right)}}\left( y \right)} \right| \ge \lambda \left| {x - y} \right|,
\end{eqnarray}
for any $x$ and $y$ belonging to the definition domain of $f(t)$. Then $f(t)$ is said to be strong convex.
\end{definition}
The definition of fractional order strong convexity is given as follows.
\begin{definition}
For a scalar convex function $f(t)$ whose first order derivative is existing, there exists a scalar $\lambda>0$ and $p>0$ such that
\begin{eqnarray}
\left| {{f^{\left( 1 \right)}}\left( x \right) - {f^{\left( 1 \right)}}\left( y \right)} \right| \ge \lambda \left| {x - y} \right|^p,
\end{eqnarray}
for any $x$ and $y$ belonging to the definition domain of $f(t)$. Then $f(t)$ is said to be $p$-order strong convex.
\end{definition}

For any constant $n-1<\alpha<n,~n\in \mathbb{N^+}$, the Caputo's derivative \cite{podlubny1998fractional} with order $\alpha$ for a smooth function $f(t)$ is given by
\begin{eqnarray}\label{Caputo}
{}^{\rm{C}}_{t_0}{\mathscr D}_t^\alpha f\left( t \right) = \frac{1}{{\Gamma \left( {n - \alpha } \right)}}\int_{t_0}^t {\frac{{{f^{\left( n \right)}}\left( \tau  \right)}}{{{{\left( {t - \tau } \right)}^{\alpha  - n + 1}}}}{\rm{d}}\tau },
\end{eqnarray}

Alternatively, (\ref{Caputo}) can be rewritten in a form similar as the conventional Taylor series:
\begin{eqnarray}\label{con2}
{}_{{t_0}}^{\rm{C}}\mathscr{D}_t^\alpha f\left( t \right) = \sum\limits_{i = 0}^\infty  {\frac{{{f^{\left( {i + 1} \right)}}\left( {{t_0}} \right)}}{{\Gamma \left( {i + 2 - \alpha } \right)}}{{\left( {t - {t_0}} \right)}^{i + 1 - \alpha }}}.
\end{eqnarray}

The Riemann-Liouville's derivative \cite{podlubny1998fractional} with $\alpha $ for $f(t)$ is given by
\begin{eqnarray}\label{R-L}
{}^{\rm{RL}}_{t_0}{\mathscr D}_t^\alpha f\left( t \right) = \frac{{{{\rm{d}}^n}}}{{{\rm{d}}{t^n}}}\Big[ {\frac{1}{{\Gamma \left( {n - \alpha } \right)}}\int_{t_0}^t {\frac{{f\left( \tau  \right)}}{{{{\left( {t - \tau } \right)}^{\alpha  - n + 1}}}}{\rm{d}}\tau } } \Big].
\end{eqnarray}
or a series form as
\begin{eqnarray}\label{con5}
{}_{{t_0}}^{\rm{RL}}\mathscr{D}_t^\alpha f\left( t \right) = \sum\limits_{i = 0}^\infty  {\frac{{{f^{\left( {i} \right)}}\left( {{t_0}} \right)}}{{\Gamma \left( {i + 1- \alpha } \right)}}{{\left( {t - {t_0}} \right)}^{i  - \alpha }}}.
\end{eqnarray}

Suppose $f(t)$ to be a smooth convex function with a unique extreme point $t^*$. It is well known that each iterative step of the conventional GM \cite{boyd2004convex} is formulated as
 \begin{eqnarray}\label{GM}
{t_{k + 1}} = {t_k} - \rho {f^{\left( 1 \right)}}\left( {{t_k}} \right),
\end{eqnarray}
where $\rho>0$ is the iteration step size.

 The basic idea of FOGMs is then replacing the first order derivative in equation (\ref{GM}) by its fractional order counterpart, either using Caputo or Riemann Liouville's definition. However, it is shown that such a heuristic approach cannot guarantee the convergence capability of the algorithms \cite{pu2010fractional}. Thus we propose an alternative FOGM whose convergence can be guaranteed, which can be formulated as
 \begin{eqnarray}\label{fogm2}
{t_{k + 2}} = {t_{k+1}} - \rho {}_{{t_k}}^{C}\mathscr{D}_{{t_{k+1}}}^\alpha f\left( t \right),
\end{eqnarray}
where $0<\alpha<1$ and $\rho>0$.

By reserving the first item of ${}_{{t_k}}^{C}\mathscr{D}_{{t_{k+1}}}^\alpha f\left( t \right)$ in its infinite series form (\ref{con2}), the following FOGM is obtained
 \begin{eqnarray}\label{fogm3}
{t_{k + 2}} = {t_{k+1}} - \rho {f^{\left( 1 \right)}}\left( {{t_k}} \right){\left( {{t_{k + 1}} - {t_k}} \right)^{1 - \alpha }}.
\end{eqnarray}

Similar analysis can be applied for the Riemann Liouville's definition. Yet we have to reserve the second item of the infinite series form (\ref{con5}) since the first item contains the constant item of a function which should not influence the extreme point.

Assume that the algorithm is convergent, FOGM (\ref{fogm3}) can be further transformed into
 \begin{eqnarray}\label{fogm4}
{t_{k + 2}} = {t_{k+1}} - \rho {f^{\left( 1 \right)}}\left( {{t_{k+1}}} \right){\left( {{t_{k + 1}} - {t_k}} \right)^{1 - \alpha }}.
\end{eqnarray}

To make the variable step size $\rho{\left( {{t_{k + 1}} - {t_k}} \right)^{1 - \alpha }}>0$ always holds, FOGM (\ref{fogm4}) can be modified as
 \begin{eqnarray}\label{fogm21}
{t_{k + 2}} = {t_{k+1}} - \rho {f^{\left( 1 \right)}}\left( {{t_{k+1}}} \right){\left| {{t_{k + 1}} - {t_k}} \right|^{1 - \alpha }},
\end{eqnarray}
where $0<\alpha<2$ and $\rho>0$.

\begin{remark}
The proposed FOGM can be extended to the vector case directly. For a convex function $f(t),~t\in {\mathbb R}^n$, we can use the proposed FOGM to derive its extreme point and the algorithm is formulated as
 \begin{eqnarray}\label{fogm30}
{t_{k + 2}} = {t_{k+1}} - \rho \nabla {f\left( {{t_{k+1}}} \right){ |{t_{k + 1}} - {t_k}|}^{1 - \alpha }},
\end{eqnarray}
where $\nabla f(t_{k+1})$ denotes its gradient at $t_{k+1}$, $0<\alpha<2$, $\rho>0$, and $\theta ^ \alpha,~\theta \in {\mathbb R}^n$ denotes taking $\alpha$-th power law of each component.

\end{remark}

\section{Properties analysis of FOGM}\label{sec4}
The proposed FOGM (\ref{fogm21}) can guarantee a convergence to the extreme point, if it is convergent. Yet, there is no further properties analysis for FOGM (\ref{fogm21}). Thus in this section, we will discuss the properties of FOGM (\ref{fogm21}).

\begin{lemma}\label{lemma1}
For a strong convex function $f(t)$, $t_k$ must go across the extreme point for infinite times in FOGM (\ref{fogm21}) with $1<\alpha<2$.
\end{lemma}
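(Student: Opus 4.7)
The plan is to argue by contradiction. Suppose $t_k$ lies strictly on one side of $t^*$ for all sufficiently large $k$. By the symmetry of the iteration, I may assume there exists $K$ such that $t_k > t^*$ for every $k \ge K$ (the case $t_k < t^*$ is identical). Since $t^*$ is the unique extremum of the convex function $f$, one has $f^{(1)}(t_k) > 0$ for $k \ge K$, so the FOGM step (\ref{fogm21}) gives $t_{k+2} - t_{k+1} = -\rho f^{(1)}(t_{k+1})|t_{k+1} - t_k|^{1-\alpha} < 0$. Hence $\{t_k\}_{k \ge K+1}$ is monotonically decreasing and bounded below by $t^*$, so it converges to some limit $L \ge t^*$.

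Then I would split into two cases. If $L > t^*$, the uniqueness of the minimiser forces $f^{(1)}(L) > 0$, while the convergence of the sequence gives $|t_{k+1} - t_k| \to 0$; since $1 - \alpha \in (-1,0)$, the fractional factor $|t_{k+1} - t_k|^{1-\alpha}$ blows up to $+\infty$. Consequently $\rho f^{(1)}(t_{k+1})|t_{k+1} - t_k|^{1-\alpha} \to +\infty$, whereas $t_{k+2} - t_{k+1} \to 0$, a contradiction.

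The delicate case is $L = t^*$. Writing $e_k = t_k - t^* > 0$, the sequence $\{e_k\}$ decreases to $0$. Strong convexity, combined with $f^{(1)}(t^*) = 0$, yields the one-sided bound $f^{(1)}(t_{k+1}) \ge \lambda e_{k+1}$. Substituting into (\ref{fogm21}) gives $e_{k+2} = e_{k+1} - \rho f^{(1)}(t_{k+1})(e_k - e_{k+1})^{1-\alpha}$. The standing hypothesis $e_{k+2} > 0$, after dividing by the positive quantity $e_{k+1}$, forces $1 > \rho\lambda (e_k - e_{k+1})^{1-\alpha}$ for every large $k$. Since $e_k - e_{k+1} \to 0^+$ and the exponent is negative, the right-hand side diverges, contradicting the inequality.

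The main obstacle is precisely the subcase $L = t^*$: both $f^{(1)}(t_{k+1})$ and $|t_{k+1} - t_k|$ shrink simultaneously, so one must extract from the fractional step's amplification (available only because $\alpha > 1$ makes $1-\alpha$ negative) a lower bound strong enough to overcome the vanishing gradient. The role of strong convexity is exactly to let me factor out $e_{k+1}$ and reduce the question to the behaviour of $(e_k - e_{k+1})^{1-\alpha}$, after which monotone convergence supplies the contradiction. A minor technical point I would dispatch separately is the possibility $t_{k+1} = t_k$ for some $k$, which would make the fractional factor undefined; either this never occurs along the assumed one-sided tail (and can be excluded by the same positivity of $f^{(1)}$), or the algorithm is undefined and the lemma is vacuous.
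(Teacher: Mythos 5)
Your proof is correct and follows essentially the same route as the paper's: assume the iterates eventually stay on one side of $t^*$, use monotonicity together with strong convexity to force the inequality $1 > \rho\lambda\left| t_{k+1}-t_k \right|^{1-\alpha}$, and contradict it via the blow-up of the fractional factor as the increments vanish. Your explicit handling of the subcase where the monotone limit $L$ differs from $t^*$ fills in a step the paper merely asserts ("$t_k$ must converge to the extreme point asymptotically"), but the core mechanism is identical.
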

\begin{proof}
Since $f(t)$ is strong convex, there exist a positive scalar $\lambda$ such that
\begin{eqnarray}\label{condition17}
\left| {{f^{\left( 1 \right)}}\left( x \right) - {f^{\left( 1 \right)}}\left( y \right)} \right| \ge \lambda \left| {x - y} \right|
\end{eqnarray}
for any $x$ and $y$ belonging to the definition domain of $f(t)$.

Then
\begin{eqnarray}\label{con31}
\begin{array}{rl}
\left| {{t_{k + 2}} - {t_{k + 1}}} \right| =&\hspace{-6pt} \left| {\rho {f^{\left( 1 \right)}}\left( {{t_{k + 1}}} \right)} \right|{\left| {{t_{k + 1}} - {t_k}} \right|^{1 - \alpha }}\\
 \ge&\hspace{-6pt} \rho\lambda \left| {{t_{k + 1}} - {t^*}} \right|{\left| {{t_{k + 1}} - {t_k}} \right|^{1 - \alpha }}.
\end{array}
\end{eqnarray}

We will then prove that $t_k$ must go across the extreme point by contradiction. Suppose $t_k$ does not go across $t^*$, then $t_k$ must get closer to the extreme point step by step since $f(t)$ is convex, which denotes that $\left| {{t_{k + 2}} - {t^*}} \right| < \left| {{t_{k + 1}} - {t^*}} \right|$ for any $k\ge3$.

Then following inequality must hold from (\ref{con31})
\begin{eqnarray}
\begin{array}{rl}
\left| {{t_{k + 2}} - {t_{k + 1}}} \right| =&\hspace{-6pt} \left| {{t_{k + 2}} - {t^*} - {t_{k + 1}} + {t^*}} \right|\\
 =&\hspace{-6pt} \left| {{t_{k + 1}} - {t^*}} \right| - \left| {{t_{k + 2}} - {t^*}} \right|\\
 \ge&\hspace{-6pt} \rho\lambda \left| {{t_{k + 1}} - {t^*}} \right|{\left| {{t_{k + 1}} - {t_k}} \right|^{1 - \alpha }},
\end{array}
\end{eqnarray}
which implies that
\begin{eqnarray}\label{con16}
\left| {{t_{k + 2}} - {t^*}} \right| \le \left(1- {\rho\lambda {{\left| {{t_{k + 1}} - {t_k}} \right|}^{1 - \alpha }} } \right)\left| {{t_{k + 1}} - {t^*}} \right|.
\end{eqnarray}
Yet, since it is assumed that $t_k$ never goes across $t^*$ and $f(t)$ is convex, $t_k$ must converge to the extreme point asymptotically. Thus for any arbitrary $\varepsilon>0$, there must exists an integer $N$ such that $|t_k-t^*|<\varepsilon$ holds for any $k>N$. Take $\varepsilon={\left( {\frac{{\rho \lambda }}{2}} \right)^{\frac{1}{{\alpha  - 1}}}}$, then $\rho\lambda|t_{k+1}-t_k|^{1-\alpha}>2$ and (\ref{con16}) does not hold. By contradiction, it is deduced that $t_k$ must go across $t^*$ from either side of $t^*$. Furthermore, this analysis will be repeated for infinite times.

%Since we have proven that $t_k$ must go across $t^*$, it must happen in finite steps. Suppose $t_k$ is very close to $t^*$ at step $k$ and it will go across $t^*$ at step $k+N$ with integer $N\ge 0$. Since $|t_{k+i}-t_{k+i-1}|$ and $|t_{k+i}-t^*|$, $i=0,1,\cdots,N-1$ are finite numbers. Thus there must exists two scalars $\mu_1$ and $\mu_2$ such that
%\begin{eqnarray}
%{\mu _1}\left| {{t_{k + i}} - {t_{k+i-1}}} \right| < \left| {{t_{k + i}} - {t^*}} \right| < {\mu _2}\left| {{t_{k+i}} - {t_{k+i-1}}} \right|
%\end{eqnarray}
%for each $i=0,1,\cdots, N-1$. This completes the proof.

%\begin{eqnarray}
%\sum\limits_{i = k}^{k + N - 1} {\left| {{t_{i + 1}} - {t_i}} \right|}  < \left| {{t_k} - {t^*}} \right| < \sum\limits_{i = k}^{k + N} {\left| {{t_{i + 1}} - {t_i}} \right|}.
%\end{eqnarray}

\end{proof}
\begin{remark}
Lemma \ref{lemma1} reveals that FOGM (\ref{fogm21}) will go back and forth across the extreme point for a strong convex function. Yet, it may converge to the extreme point asymptotically from one side for a non-strong convex function since condition (\ref{condition17}) does not hold any more.
\end{remark}

%\begin{theorem}\label{theorem1}
%Unlike {\rm{Lemma \ref{thm5}}}, there is no need to force $\alpha<1.5$ to guarantee the convergence of algorithm {\rm{(\ref{fogm21})}}.
%\end{theorem}
%\begin{proof}
%From the analysis in Lemma \ref{lemma1}, $t_k$ must go across the extreme point $t^*$ with $\alpha>1$. Thus there must exist two points such that $t_{k_0}<t^*<t_{k_0+1}$. Similar analysis can keep going on, then there must exist two points such that $t_{k_1}>t^*>t_{k_1+1}$. Thus there exists a sequence of $t_0,~t_1,~\cdots, t_i,~t_{i+1},~\cdots$ such that $t_{k_i}<t^*<t_{k_i+1}$ and $t_{k_{i+1}}>t^*>t_{k_{i+1}+1}$, which denotes that $t_k$ will keep going across $t^*$. Moreover, once $t_k$ go across $t^*$ from one side, it will move toward the other side, from which it is concluded that FOGM (\ref{fogm21}) will never divergent if $\Delta_k$ is not divergent.
%\end{proof}

%\begin{theorem}
%When $1<\alpha<2$, FOGM (\ref{fogm22}) will show a faster convergence rate with $|\Delta_i|<1$. When $0<\alpha<1$, FOGM (\ref{fogm22}) will show a faster convergence rate with $|\Delta_i|>1$.
%\end{theorem}

%\begin{subequations}
%\begin{numcases}{}
%   3x+4y=5\\
%   5x-9y=13
%\end{numcases}
%\end{subequations}
\subsection{Convergence capability analysis of FOGM}

\begin{theorem}\label{theo2}
For a convex function $f(t)$ satisfying Lipschitz continuous gradient, FOGM (\ref{fogm21}) with $1<\alpha<2$ will always converge to a bounded region of $t^*$ for arbitrary $\rho$.
\end{theorem}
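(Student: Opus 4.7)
The plan is to combine the Lipschitz estimate on the gradient with the self-regulating nature of the step-size factor $|t_{k+1}-t_k|^{1-\alpha}$, which for $1<\alpha<2$ satisfies $1-\alpha\in(-1,0)$ and therefore shrinks whenever the previous increment is large. Because $f^{(1)}(t^*)=0$ and $f$ has Lipschitz continuous gradient with constant $\mu$, I would first record the fundamental bound
\[
|t_{k+2}-t_{k+1}|\le \rho\mu\,|t_{k+1}-t^*|\,|t_{k+1}-t_k|^{1-\alpha},
\]
obtained by applying $|f^{(1)}(t_{k+1})|=|f^{(1)}(t_{k+1})-f^{(1)}(t^*)|\le\mu|t_{k+1}-t^*|$ in (\ref{fogm21}). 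Convexity of $f$ further guarantees that $f^{(1)}(t_{k+1})$ has the same sign as $t_{k+1}-t^*$, so the update always drives $t_{k+2}$ toward $t^*$ along the line through $t_{k+1}$; hence either $|t_{k+2}-t^*|\le|t_{k+1}-t^*|$ (no overshoot) or $t_{k+2}$ lies on the opposite side of $t^*$ with $|t_{k+2}-t^*|=|t_{k+2}-t_{k+1}|-|t_{k+1}-t^*|$.

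Next I would argue by contradiction, supposing that $\{t_k\}$ escapes every bounded neighbourhood of $t^*$, so that along some subsequence $|t_{k+1}-t^*|\to\infty$. Two regimes are to be examined. In the first, $|t_{k+1}-t_k|$ is eventually bounded below by a large constant along the subsequence; then $|t_{k+1}-t_k|^{1-\alpha}\to 0$ because $1-\alpha<0$, and the fundamental bound forces $|t_{k+2}-t_{k+1}|$ to be small compared with $|t_{k+1}-t^*|$, precluding overshoot and yielding $|t_{k+2}-t^*|\le|t_{k+1}-t^*|$; iterating prevents any further growth, contradicting unboundedness. In the second regime, $|t_{k+1}-t_k|$ stays small while $|t_{k+1}-t^*|$ grows; writing out (\ref{fogm21}) at the preceding step shows that $|t_k-t_{k-1}|^{1-\alpha}$ must itself have been very large, i.e.\ $|t_k-t_{k-1}|$ very small, and propagating this observation backward conflicts with the fact that the iterate has already drifted far from $t^*$. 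This is essentially the same self-regulating mechanism that was used in the proof of Lemma \ref{lemma1}, but now driving boundedness rather than oscillation.

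The principal obstacle will be making the above dichotomy quantitative without the strong-convexity assumption, because without it we have no lower bound on $|f^{(1)}(t_{k+1})|$ in terms of $|t_{k+1}-t^*|$, and must therefore work with only the upper bound above together with the sign property coming from convexity. A clean way to close the contradiction is probably to introduce a Lyapunov-like quantity such as $V_k=\max\{|t_k-t^*|,|t_{k+1}-t^*|\}$ tracked jointly with the most recent increment $|t_{k+1}-t_k|$, and to show that once $V_k$ exceeds a threshold depending only on $\rho$, $\mu$, and $\alpha$, the variable step size is automatically forced small enough that $V_{k+1}\le V_k$, while within a bounded number of subsequent steps $V_k$ must strictly decrease. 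The bounded region around $t^*$ asserted in the theorem is then read off from this threshold, and because the threshold is finite for \emph{any} $\rho>0$, the boundedness conclusion holds for arbitrary step size.
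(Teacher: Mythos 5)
Your setup matches the paper's: the same fundamental bound $|\Delta_{k+2}|\le\rho\mu|t_{k+1}-t^*|\,|\Delta_{k+1}|^{1-\alpha}$, the same sign observation from convexity, and the same realization that the overshoot/no-overshoot dichotomy is where the action is. But the proposal stops short of the one idea that actually closes the argument, and the substitute you sketch does not work. The paper splits on whether $t_k$ crosses $t^*$ finitely or infinitely often. In the finite case, convexity alone forces asymptotic convergence. In the infinite case, the decisive step is that at a crossing index $k_i$ one has $|t_{k_i+1}-t^*|<|\Delta_{k_i+1}|$, which turns the fundamental bound into a \emph{self-referential} recursion in the increments alone, $|\Delta_{k_{i+1}+1}|<\rho\mu\,|\Delta_{k_i+1}|^{2-\alpha}$; setting $z_i=\ln|(\rho\mu)^{1/(1-\alpha)}\Delta_{k_i+1}|$ gives $z_{i+1}<(2-\alpha)z_i$ with $0<2-\alpha<1$, hence $\limsup_i|\Delta_{k_i+1}|\le(\rho\mu)^{1/(\alpha-1)}$, and convexity keeps all intermediate iterates within that increment of $t^*$. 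This contraction-in-logarithm at crossing times is the entire content of the theorem, and it is absent from your plan.

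The Lyapunov route you propose in its place has a concrete flaw: you claim that once $V_k=\max\{|t_k-t^*|,|t_{k+1}-t^*|\}$ exceeds a threshold depending only on $\rho$, $\mu$, $\alpha$, the step is "automatically forced small enough that $V_{k+1}\le V_k$." It is not. The variable step factor is $|\Delta_{k+1}|^{1-\alpha}$, which depends on the \emph{previous increment}, not on the current distance to $t^*$; if $t_{k+1}$ is far from $t^*$ but $|\Delta_{k+1}|$ happens to be tiny, the factor $|\Delta_{k+1}|^{1-\alpha}$ is enormous and the next step can overshoot far past $t^*$, so $V_{k+1}>V_k$ is entirely possible above any threshold. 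Your "second regime" backward-propagation argument is likewise only a heuristic — nothing in it produces a quantitative contradiction. So the proposal correctly identifies the ingredients but is missing the mechanism (the crossing-time recursion and its exponent $2-\alpha<1$) that converts them into a bound; as written it would not compile into a proof.
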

\begin{proof}
Since $f(t)$ satisfying Lipschitz continuous gradient, it is deduced that
\begin{eqnarray}
\left| {{f^{\left( 1 \right)}}\left( x \right) - {f^{\left( 1 \right)}}\left( y \right)} \right| \le \mu \left| {x - y} \right|,
\end{eqnarray}
for any $x$ and $y$ belonging to the definition domain of $f(t)$.

Define $\Delta_{k+1}=t_{k+1}-t_k$ and rewrite (\ref{fogm21}) as
\begin{eqnarray}
{|\Delta _{k + 2}|} =  | \rho {f^{\left( 1 \right)}}\left( {{t_{k + 1}}} \right)\Delta _{k + 1}^{1 - \alpha }|.
\end{eqnarray}

Then
\begin{eqnarray}\label{conyy}
\begin{array}{rl}
\left| {{\Delta _{k + 2}}} \right| =&\hspace{-6pt} \left| {\rho {f^{\left( 1 \right)}}\left( {{t_{k + 1}}} \right)\Delta _{k + 1}^{1 - \alpha }} \right|\\
 =&\hspace{-6pt} \left| {\rho \left[ {{f^{\left( 1 \right)}}\left( {{t_{k + 1}}} \right) - {f^{\left( 1 \right)}}\left( {{t^*}} \right)} \right]\Delta _{k + 1}^{1 - \alpha }} \right|\\
 \le &\hspace{-6pt}\left| {\rho \mu \left( {{t_{k + 1}} - {t^*}} \right)\Delta _{k + 1}^{1 - \alpha }} \right|.
\end{array}
\end{eqnarray}

Case 1: If $t_k$ goes across $t^*$ for only finite times, then there exists a sufficient large $N$ such that $t_k$ never goes across $t^*$ for $k>N$. Due to the convexity of $f(t)$ and the fact that $t_k$ never goes across $t^*$ for $k>N$, $\Delta_k$ must converge to $0$ and $t_k$ must be convergent. It is shown that the criteria for the convergence of (\ref{fogm21}) is $\mathop {\lim }\limits_{k \to \infty } \rho {f^{\left( 1 \right)}}\left( {{t_{k + 1}}} \right){\left| {{t_{k + 1}} - {t_k}} \right|^{1 - \alpha }} = 0$. Since $|t_{k+1}-t_k|^{1-\alpha}$ is nonzero with $1<\alpha<2$, it is concluded that $\mathop {\lim }\limits_{k \to \infty } {f^{\left( 1 \right)}}\left( {{t_{k + 1}}} \right) = 0$, which implies FOGM (\ref{fogm21}) will converge to $t^*$ asymptotically and the upper bound is zero.

% Suppose $t_k$ converges to a different point $T\neq t^*$. for arbitrary $\varepsilon>0$, there must exists a $N>0$ such that $|t_k-T|<\varepsilon$ for $k>N$.Moreover, there must exists sufficient small $\varepsilon$ such that $|t_k-T|<\varepsilon<|T-t^*|$ since $T\neq t^*$. Additionally, $f(t)$ is convex with only one extreme point, thus for any $\varepsilon>0$, there must exist a $\delta>0$ such that $\delta  = \mathop {\inf }\limits_{k > N} |{f^{\left( 1 \right)}}\left( {{t_k}} \right)|>0$.

Case 2: If $t_k$ goes across $t^*$ for infinite times, then one can find a sequence of $k_i,~i=1,2,\cdots $ such that $f^{(1)}(t_{k_i})f^{(1)}(t_{k_i+1})<0$. We will then prove that $|\Delta_{k_i+1}|$ is bounded.

Since $f^{(1)}(t_{k_i})f^{(1)}(t_{k_i+1})<0$ holds for each $k_i$, thus $|t_{k_i+1}-t^*|<|\Delta_{k_i+1}|$ and $|\Delta_{k_{i+1}}|\le|\Delta_{{k_i}+1}|$ hold. Thus for each $k_i$, (\ref{conyy}) can be transformed into
\begin{eqnarray} \label{con20}
\begin{array}{rl}
\left| {{\Delta _{{k_{i + 1}} + 1}}} \right| \le&\hspace{-6pt} \rho \mu \left| {{t_{{k_{i + 1}}}} - {t^*}} \right|{\left| {{\Delta _{{k_{i + 1}}}}} \right|^{1 - \alpha }}\\
 <&\hspace{-6pt} \rho \mu {\left| {{\Delta _{{k_{i }+1}}}} \right|^{2 - \alpha }}
\end{array}
\end{eqnarray}
where $t_{k_1+1}$ is the first time when $t_k$ goes across $t^*$ from one side.

Following equation can be obtained from (\ref{con20})
\begin{eqnarray}
\left| {{{\left( {\rho \mu } \right)}^{\frac{1}{{1-\alpha  }}}}{\Delta _{{k_{i + 1}} + 1}}} \right| < {\left| {{{\left( {\rho \mu } \right)}^{\frac{1}{{1-\alpha  }}}}{\Delta _{{k_{i }+1}}}} \right|^{2 - \alpha }}.
\end{eqnarray}
Take a transformation $z_i={\rm{ln}}\left| {{{\left( {\rho \mu } \right)}^{\frac{1}{{1-\alpha }}}}{\Delta _{{k_{i }} + 1}}} \right|$ and one can obtain that ${z_{i + 1}} < \left( {2 - \alpha } \right){z_i}$, which denotes that $\mathop {\lim }\limits_{i \to \infty } {z_i} \le 0$ since $0<2-\alpha<1$. Thus $\mathop {\lim }\limits_{i \to \infty } |{\Delta _{{k_{i + 1}}}}| \le {\left( {\rho \mu } \right)^{\frac{1}{{\alpha  - 1}}}}$.

Moreover, due to the convexity of function $f(t)$, $|t_j-t^*|<|\Delta_{k_i+1}|$ holds for any $k_i+1\le j \le k_{i+1}$. Thus $|\Delta_{k_i+1}|,~i=1,2,\cdots $ give an upper bound for $|t_k-t^*|$. Additionally, we have proven that $\mathop {\lim }\limits_{i \to \infty } {|\Delta _{{k_{i + 1}}}|} \le {\left( {\rho \mu } \right)^{\frac{1}{{\alpha  - 1}}}}$, which implies that $t_k$ will converge to a bounded region of $t^*$.

Combining Case 1 and 2, we complete the proof.

\end{proof}
\begin{remark}
Generally, a larger $\alpha>1$ will mean a worse convergence accuracy when $\rho\mu<1$, since ${\left| {\rho {\mu}} \right|^{\frac{1}{{\alpha  - 1}}}}$ is increasing with the increasing of $\alpha$. Yet, a larger $\alpha>1$ will give a better convergence capability when $\rho\mu>1$. Furthermore, ${\left| {\rho {\mu }} \right|^{\frac{1}{{\alpha  - 1}}}}$ tends to zero with $\alpha$ tending to $1$ when $\rho\mu<1$, which fits the conclusion of conventional GM well. Similarly, a larger step size $\rho$ gives a larger bound while a smaller step size $\rho$ gives a smaller bound.
\end{remark}
\begin{corollary}
Theorem \ref{theo2} still holds when $f(t)$ satisfies Lipschitz continuous gradient for $|t-t^*|>R$, $R>0$.
\end{corollary}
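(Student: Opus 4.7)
The plan is to adapt the proof of Theorem~\ref{theo2}, treating the ball $B_R:=\{t:|t-t^*|\le R\}$ as a safe bounded region in its own right and controlling only those iterates that venture outside it. The main obstacle is that the Lipschitz condition no longer reaches $t^*$, so the bound $|f^{(1)}(x)-f^{(1)}(t^*)|\le \mu|x-t^*|$ used in Theorem~\ref{theo2} is unavailable. I would overcome this by noting that on the compact ball $B_R$ the continuous gradient attains a finite supremum $C_1:=\sup_{t\in B_R}|f^{(1)}(t)|$, and by piecing this together with the Lipschitz inequality applied between an outer point $x$ and the nearer boundary point $t^*\pm R$ to obtain the global growth bound $|f^{(1)}(x)|\le \mu|x-t^*|+C_1$ valid on the whole domain.

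With this bound in hand, Case~1 (finitely many crossings) carries over verbatim and invokes nothing more than convexity: eventual monotonicity forces $\Delta_k\to 0$, and since $|\Delta_{k+1}|^{1-\alpha}$ is nonzero for $1<\alpha<2$, the iteration identity forces $f^{(1)}(t_{k+1})\to 0$ and hence $t_k\to t^*$.

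For Case~2 (infinitely many crossings), I would replay the crossing recursion using the modified growth bound. At a crossing index $k_i$ one still has $|t_{k_i+1}-t^*|<|\Delta_{k_i+1}|$, and the analogue of inequality (\ref{con20}) becomes
\[
|\Delta_{k_{i+1}+1}|\le \rho\bigl(\mu|\Delta_{k_{i+1}}|+C_1\bigr)|\Delta_{k_{i+1}}|^{1-\alpha}=\rho\mu|\Delta_{k_{i+1}}|^{2-\alpha}+\rho C_1|\Delta_{k_{i+1}}|^{1-\alpha}.
\]
Writing $g(a):=\rho\mu a^{2-\alpha}+\rho C_1 a^{1-\alpha}$, the fixed-point equation $g(a)=a$ reduces to $a^{\alpha}=\rho\mu a+\rho C_1$; since $\alpha>1$ the left side eventually dominates, giving a unique positive solution $a^*$ with $g(a)<a$ for $a>a^*$. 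Consequently $|\Delta_{k_i+1}|$ is eventually bounded by $\max\{a^*,|\Delta_{k_1+1}|\}$, and, exactly as in Theorem~\ref{theo2}, $|t_j-t^*|<|\Delta_{k_i+1}|$ for $k_i+1\le j\le k_{i+1}$. Combining the two cases yields a bounded region of $t^*$ of radius at most $\max\{R,a^*,|\Delta_{k_1+1}|\}$, completing the plan.
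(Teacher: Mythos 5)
Your strategy is genuinely different from the paper's: you replace the missing Lipschitz control at $t^*$ by the global affine growth bound $|f^{(1)}(x)|\le\mu|x-t^*|+C_1$ and then try to close the crossing recursion with the two-term map $g(a)=\rho\mu a^{2-\alpha}+\rho C_1 a^{1-\alpha}$. The paper instead keeps the bound purely multiplicative: it first disposes of the trivial regimes (finitely many crossings; finitely many excursions outside $\{|t-t^*|>R\}$, where $R$ itself is the bound), and in the remaining regime it uses the fact that $|t_{k_i+1}-t^*|>R$ to dominate the constant $|f^{(1)}(R+t^*)|$ by a multiple of $|t_{k_{i+1}}-t^*|$ itself, arriving at $|\Delta_{k_{i+1}+1}|\le\rho\theta\,|t_{k_{i+1}}-t^*|\,|\Delta_{k_{i+1}}|^{1-\alpha}$ with $\theta=\max\{\mu,\ |f^{(1)}(R+t^*)|\}$, after which Case 2 of Theorem \ref{theo2} applies verbatim. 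This difference is not cosmetic; it is exactly where your version breaks.

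The gap is in your fixed-point step. The map $g$ is not monotone and is unbounded near the origin, since $a^{1-\alpha}\to\infty$ as $a\to0^+$ for $1<\alpha<2$. Knowing only that $g(a)<a$ for $a>a^*$ therefore does not yield ``eventually bounded by $\max\{a^*,|\Delta_{k_1+1}|\}$'': as soon as some crossing step falls below $a^*$ (indeed arbitrarily close to $0$, which is precisely what happens when an iterate lands near $t^*$), your bound on the next crossing step is $\rho C_1$ times an arbitrarily large factor, so no ball is invariant under the recursion and the induction does not close. The paper's homogeneous recursion $b_{i+1}\le\rho\theta\, b_i^{2-\alpha}$ has no such defect ($\rho\theta a^{2-\alpha}\to0$ as $a\to0$), which is the whole reason the additive constant must be absorbed multiplicatively using $|t-t^*|>R$ rather than carried along as $+\,C_1$. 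There is also an index slip feeding into this: the crossing geometry gives $|t_{k_{i+1}}-t^*|\le|t_{k_i+1}-t^*|<|\Delta_{k_i+1}|$, so the factor in your parenthesis should be $\mu|\Delta_{k_i+1}|+C_1$ rather than $\mu|\Delta_{k_{i+1}}|+C_1$; as written, your displayed inequality is not a one-variable recursion in the sequence $|\Delta_{k_i+1}|$ at all. Your Case 1 and your justification that $C_1$ is finite are fine.
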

\begin{proof}
If $t_k$ only goes across $t^*$ for finite times, then $t_k$ will converge to $t^*$ asymptotically.

If $t_k$ goes across $t^*$ for infinite times but $|t_k-t^*|>R$ holds for finite times, then $|t_k-t^*|$ is bounded by $R>0$.

If $|t_k-t^*|>R$ for infinite times, then one can find a sequence $k_i,~i=1,2,\cdots$ such that $|t_{k_i+1}-t^*|>R$ holds for each $i$. We will then prove that $|\Delta _{t_{k_i+1}}|$ is bounded. Following inequality can be obtained for the $t_{k_i+1}>t^*$ case
\begin{eqnarray}
\begin{array}{rl}
\left| {{\Delta _{{k_{i + 1}} + 1}}} \right| =&\hspace{-6pt} \rho \left| {{f^{\left( 1 \right)}}\left( {{t_{{k_{i + 1}}}}} \right)} \right|{\left| {{\Delta _{{k_{i + 1}}}}} \right|^{1 - \alpha }}\\
 \le&\hspace{-6pt} \rho \left| {{f^{\left( 1 \right)}}\left( {{t_{{k_i} + 1}}} \right) - {f^{\left( 1 \right)}}\left( {R + {t^*}} \right)} \right|{\left| {{\Delta _{{k_{i + 1}}}}} \right|^{1 - \alpha }}\\
&\hspace{-6pt} + \rho \left| {{f^{\left( 1 \right)}}\left( {R + {t^*}} \right) - {f^{\left( 1 \right)}}\left( {{t^*}} \right)} \right|{\left| {{\Delta _{{k_{i + 1}}}}} \right|^{1 - \alpha }}\\
 \le&\hspace{-6pt} \rho \mu \left| {{t_{{k_{i + 1}}}} - \left( {R + {t^*}} \right)} \right|\\
 &\hspace{-6pt}+ \rho \left| {{f^{\left( 1 \right)}}\left( {R + {t^*}} \right)} \right|R{\left| {{\Delta _{{k_{i + 1}}}}} \right|^{1 - \alpha }}\\
 \le&\hspace{-6pt} \rho \theta \left| {{t_{{k_{i + 1}}}} - {t^*}} \right|{\left| {{\Delta _{{k_{i + 1}}}}} \right|^{1 - \alpha }}
\end{array}
\end{eqnarray}
where $\theta  = \max \{\mu,~|f^{(1)}({R+t^*})|\}$. Then similar to the Case 2 in the proof of Theorem \ref{theo2}, it is concluded that $|\Delta_{k_i+1}|$ is bounded. Similar analysis can be applied for the $t_{k_i+1}<t^*$ case.

From all the above analysis, it is concluded that either $|t_k-t^*|$ or $|\Delta_k|$ is bounded, which establishes the theorem.
\end{proof}
%\begin{remark}
%In Theorem \ref{theo2}, ${\left| {\rho \mu{\mu _2}} \right|^{\frac{1}{{\alpha  - 1}}}}$ gives an estimation of the upper bound of convergency accuracy while ${\left| {\rho {\mu _1}\lambda} \right|^{\frac{1}{{\alpha  - 1}}}}$ in Theorem \ref{theo1} gives an estimation of the lower bound.
%\end{remark}
\begin{theorem}\label{corollary1}
For a strong convex function $f(t)$ which satisfies Lipschitz continuous gradient, $t_k$ cannot asymptotically converge to the extreme point but only converges to a bounded region of $t^*$.
\end{theorem}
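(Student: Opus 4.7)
The plan is to obtain the theorem by combining Lemma \ref{lemma1} and Theorem \ref{theo2}. Since every strong convex function is convex, Theorem \ref{theo2} directly supplies the ``bounded region of $t^*$'' half of the claim under the standing assumption $1<\alpha<2$. The nontrivial content is to rule out exact asymptotic convergence to $t^*$, and for this I would argue by contradiction in the same spirit as the proof of Lemma \ref{lemma1}, but applied to the true iterates rather than to a hypothetical non-crossing sequence.

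Suppose, toward contradiction, that $t_k\to t^*$. The triangle inequality $|\Delta_{k+1}|\le|t_{k+1}-t^*|+|t_k-t^*|$ forces $|\Delta_{k+1}|\to 0$, and since $1-\alpha<0$ the factor $|\Delta_{k+1}|^{1-\alpha}$ blows up; in particular $\rho\lambda|\Delta_{k+1}|^{1-\alpha}>2$ for all sufficiently large $k$. Strong convexity together with $f^{(1)}(t^*)=0$ yields $|f^{(1)}(t_{k+1})|\ge\lambda|t_{k+1}-t^*|$, so the FOGM update (\ref{fogm21}) produces
\[
|\Delta_{k+2}|\ge\rho\lambda|t_{k+1}-t^*|\,|\Delta_{k+1}|^{1-\alpha}>2|t_{k+1}-t^*|
\]
for all large $k$. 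A second application of the triangle inequality then gives
\[
|t_{k+2}-t^*|\ge|t_{k+2}-t_{k+1}|-|t_{k+1}-t^*|>|t_{k+1}-t^*|,
\]
so $|t_k-t^*|$ becomes strictly increasing from some index onward, contradicting $t_k\to t^*$. Combining with Theorem \ref{theo2} then forces the iterates to converge only to a bounded neighbourhood of $t^*$.

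The step I expect to be the main obstacle is cleanly separating the two roles of the hypotheses: Lipschitz continuity of the gradient enters only through Theorem \ref{theo2} to pin down the bounded-region conclusion, whereas strong convexity must be invoked via the lower bound $|f^{(1)}(t_{k+1})|\ge\lambda|t_{k+1}-t^*|$ to force the step magnitude $|\Delta_{k+2}|$ to dominate the residual $|t_{k+1}-t^*|$ once the iterates get close, which is what drives the contradiction. The degenerate situation in which some $t_k$ equals $t^*$ exactly is trivially handled by noting that the iteration then stalls at $t^*$ (since $f^{(1)}(t^*)=0$ and $\rho|f^{(1)}(t_{k+1})||\Delta_{k+1}|^{1-\alpha}=0$), and I would dispatch this edge case in a single sentence rather than treating it as a separate branch.
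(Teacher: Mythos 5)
Your proof is correct and follows the same overall strategy as the paper: invoke Theorem \ref{theo2} for the bounded-region half, then rule out $t_k\to t^*$ by contradiction using the strong-convexity lower bound $|f^{(1)}(t_{k+1})|\ge\lambda|t_{k+1}-t^*|$ together with the blow-up of $|\Delta_{k+1}|^{1-\alpha}$ as $|\Delta_{k+1}|\to 0$. Where you diverge is in how the contradiction is closed. The paper first invokes Lemma \ref{lemma1} to get infinitely many crossings, splits into the cases $f^{(1)}(t_k)f^{(1)}(t_{k+1})<0$ and $>0$ to compare $|t_{k+1}-t^*|$ with $|\Delta_{k+1}|$, and then chooses $\varepsilon=(\rho\lambda/2)^{1/(\alpha-1)}$ to force $|\Delta_{k+1}|>2|\Delta_k|$, i.e.\ geometric growth of the step sizes, contradicting $|\Delta_k|<2\varepsilon$. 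You instead deduce $|\Delta_{k+1}|\to 0$ directly from the assumed convergence, get $\rho\lambda|\Delta_{k+1}|^{1-\alpha}>2$ for large $k$, and conclude $|t_{k+2}-t^*|>|t_{k+1}-t^*|$ by the triangle inequality, so the residuals are eventually strictly increasing. Your route is leaner: it needs neither Lemma \ref{lemma1} nor the crossing case analysis, and it sidesteps a step in the paper's Case 1 where the claimed inequality $|t_{k+1}-t^*|\,|\Delta_{k+1}|^{1-\alpha}>|t_{k+1}-t^*|^{1-\alpha}|\Delta_{k+1}|$ actually points the wrong way when $|t_{k+1}-t^*|<|\Delta_{k+1}|$. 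One small caveat on your closing remark: if some iterate lands exactly on $t^*$, the iteration does not merely ``stall harmlessly''---it genuinely converges to $t^*$, so this degenerate event is an exception to the statement rather than something your argument absorbs; like the paper, you should explicitly exclude it as nongeneric rather than claim to dispatch it.
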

\begin{proof}
%Since $f(t$ is strong convex, $t_k$ will go across $t_k$ for infinite times from Lemma \ref{lemma1}. Then one can find a sequence of $k_i,~i=1,2,\cdots $ such that $f^{(1)}(t_{k_i})f^{(1)}(t_{k_i+1})<0$. We will then prove that $|t_{k_i+1}-t^*|$ has a lower bound.

Since $f(t)$ satisfies Lipschitz continuous gradient, FOGM (\ref{fogm21}) must converge to a bounded region of $t^*$ due to Theorem \ref{theo2}. We will prove that FOGM (\ref{fogm21}) cannot converge to $t^*$ asymptotically. Since $f(t)$ is strong convex, there exist a scalars $\lambda$ such that
\begin{eqnarray}
\left| {{f^{\left( 1 \right)}}\left( x \right) - {f^{\left( 1 \right)}}\left( y \right)} \right| \ge \lambda \left| {x - y} \right|
\end{eqnarray}
for any $x$ and $y$ belonging to the definition domain of $f(t)$. Similar to the proof of Theorem \ref{theo2}, one can obtain following inequality
\begin{eqnarray}\label{conzz}
\left| {{\Delta _{k + 2}}} \right| \ge \left| {\rho \lambda \left( {{t_{k + 1}} - {t^*}} \right)\Delta _{k + 1}^{1 - \alpha }} \right|.
\end{eqnarray}

Suppose $t_k$ converges to $t^*$ asymptotically. Thus for arbitrary $\varepsilon$, there exists an integer $N$ such that $|t_k-t^*|<\varepsilon$ for any $k>N$. Since $f(t)$ is strong convex, thus $t_k$ will go across $t^*$ for infinite times with $k>N$  from Lemma \ref{lemma1}. We will then prove the theorem by contradiction. If $f^{(1)}(t_k)f^{(1)}(t_{k+1})<0$ holds at some step $k$, then $|t_{k+1}-t^*|<|\Delta_{k+1}|$ and (\ref{conzz}) can be rewritten as
\begin{eqnarray}
\begin{array}{rl}
\left| {{\Delta _{k + 2}}} \right| \ge &\hspace{-6pt}\rho \lambda \left| {{t_{k+1}} - {t^*}} \right|{\left| {{\Delta _{k+1}}} \right|^{1 - \alpha }}\\
 > &\hspace{-6pt} \rho \lambda {\left| {{t_{k+1}} - {t^*}} \right|^{1 - \alpha }}|\Delta_{k+1}|.
\end{array}
\end{eqnarray}
Similarly, if $f^{(1)}(t_{k})f^{(1)}(t_{k+1})>0$ holds at some step $k>N$, then $|t_{k}-t^*|>|\Delta_{k+1}|$ and $|t_{k}-t^*|>|t_{k+1}-t^*|$ hold. Thus (\ref{conzz}) can be rewritten as
\begin{eqnarray}
\begin{array}{rl}
\left| {{\Delta _{k + 2}}} \right| \ge &\hspace{-6pt}\rho \lambda \left| {{t_{k+1}} - {t^*}} \right|{\left| {{\Delta _{k+1}}} \right|^{1 - \alpha }}\\
 > &\hspace{-6pt} \rho \lambda {\left| {{t_k} - {t^*}} \right|^{1 - \alpha }}|\Delta_{k+1}|.
\end{array}
\end{eqnarray}

Let $\varepsilon={\left( {\frac{{\rho \lambda }}{2}} \right)^{\frac{1}{{\alpha  - 1}}}}$, then $|\Delta_{k+1}|>2|\Delta_k|$ holds for any $k>N$. Thus $\Delta_k$ will finally be divergent, which contradicts to the assumption that $|\Delta_k|<2\varepsilon$. From above analysis, it is concluded that FOGM (\ref{fogm21}) will only converge to a region of $t^*$, which establishes the theorem.
\end{proof}

\begin{remark}
In the conventional GM, the convex function is supposed to be strong convex when talking about the convergence property. Yet, the property of strong convexity is to avoid the asymptotical convergence of FOGM (\ref{fogm21}) from the analysis of Corollary \ref{corollary1}. Thus for some non-strong convex function, FOGM (\ref{fogm21}) may still guarantee a asymptotical convergence, which will be discussed later.
\end{remark}
%\begin{corollary}
%For a convex function $f(t)$ satisfying gradient Lipschitz condition, FOGM (\ref{fogm21}) will either converge to the extreme point asymptotically or converge to a neighborhood of the extreme point.
%\end{corollary}

\subsection{Convergence rate analysis of FOGM}\label{subc}
In this subsection, we will discuss the convergence rate of FOGM (\ref{fogm21}) with different gradient order $\alpha$ qualitatively.

\begin{itemize}
    \item[1)] With $0<\alpha<1$, if $|\Delta_k|>1$, the convergence rate will be faster than the conventional case since $|\Delta_k|^{1-\alpha}>1$ and the step size $\rho |\Delta_k|^{1-\alpha}$ is larger than $\rho$. Yet, if $|\Delta_k|<1$, the convergence rate will be rather slower since the step size $\rho |\Delta_k|^{1-\alpha}$ is smaller than $\rho$. Particularly, if FOGM (\ref{fogm21}) with $0<\alpha<1$ is convergent, the convergence rate is very slow when $t_k$ is close to $t^*$ since $\Delta_k$ is very small.

%        Thus the convergence property with $0<\alpha<1$ will be worse than the conventional case for the fact that if conventional GM is divergent for some $\rho$, then FOGM with $0<\alpha<1$ may also be divergent since $|\Delta_k|$ is very large.

    \item[2)] With $1<\alpha<2$, if $|\Delta_k|>1$, the convergence rate will be slower than the conventional case since $|\Delta_k|^{1-\alpha}<1$ and the step size $\rho |\Delta_k|^{1-\alpha}$ is smaller than $\rho$. Yet, if $|\Delta_k|<1$, the convergence rate will be rather faster since the step size $\rho |\Delta_k|^{1-\alpha}$ is much larger than $\rho$. Moreover, FOGM (\ref{fogm21}) shows a great convergence property but with a lower convergence accuracy.
    \item[3)] The conventional GM with $\alpha=1$ can be viewed as a trade-off in the convergence rate between $|\Delta_k|>1$ and $|\Delta_k|\le 1$.
\end{itemize}
\section{Modified FOGM}\label{sec8}
Though algorithm (\ref{fogm21}) can guarantee a great convergence property with $\alpha>1$, it can only converge to a small neighbourhood of $t^*$, which is undesired. Yet, the convergence accuracy would be improved a lot with algorithm {\rm{(\ref{fogm21})}} modified and the novel FOGM can be formulated as
 \begin{eqnarray}\label{fogm22}
{t_{k + 2}} = {t_{k+1}} - \rho {f^{\left( 1 \right)}}\left( {{t_{k+1}}} \right){\left(\left| {{t_{k + 1}} - {t_k}} \right|+\delta \right)^{1 - \alpha }},
\end{eqnarray}
where $1<\alpha<2$ and $\delta$ is a positive scalar.
\begin{theorem}\label{theo41}
For a convex function satisfying Lipschitz continuous gradient, modified FOGM (\ref{fogm22}) will converge to a bounded region of $t^*$ for arbitrary step size $\rho$.
\end{theorem}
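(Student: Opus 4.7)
Because $1-\alpha<0$ and $\delta>0$, the map $x\mapsto(x+\delta)^{1-\alpha}$ is decreasing on $[0,\infty)$, yielding two complementary majorizations that I would exploit throughout:
\begin{eqnarray*}
(|\Delta_{k+1}|+\delta)^{1-\alpha}\le \delta^{1-\alpha},\\
(|\Delta_{k+1}|+\delta)^{1-\alpha}\le |\Delta_{k+1}|^{1-\alpha}\ \text{when}\ |\Delta_{k+1}|>0.
\end{eqnarray*}
The first inequality removes the step-size blow-up that plagues the un-regularized algorithm (\ref{fogm21}) as $|\Delta_k|\to 0$; the second shows that the modified step never exceeds the un-regularized one, so the computations carried out in the proof of Theorem \ref{theo2} transfer with only cosmetic changes. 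With these two bounds in hand, the plan is to re-run the two-case dichotomy of Theorem \ref{theo2}.

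\emph{Case 1 (only finitely many crossings of $t^*$).} Past the last crossing, convexity of $f$ combined with the no-crossing constraint forces $|t_k-t^*|$ to decrease strictly monotonically, so $t_k\to t_\infty$ and $|\Delta_k|\to 0$. I would then pass to the limit in $|\Delta_{k+2}|=\rho|f^{(1)}(t_{k+1})|(|\Delta_{k+1}|+\delta)^{1-\alpha}$: the crucial difference with Theorem \ref{theo2} is that the second factor now tends to the finite positive constant $\delta^{1-\alpha}$ rather than diverging, which cleanly forces $f^{(1)}(t_\infty)=0$ and hence $t_\infty=t^*$. So in this case the iterates actually converge asymptotically, strictly more than the theorem requires.

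\emph{Case 2 (infinitely many crossings of $t^*$).} I would extract a sign-change subsequence $\{k_i\}$ with $f^{(1)}(t_{k_i})f^{(1)}(t_{k_i+1})<0$. Since between consecutive crossings the iterates stay on one side of $t^*$ with strictly decreasing distance to $t^*$, one obtains both $|t_{k_{i+1}}-t^*|\le |t_{k_i+1}-t^*|<|\Delta_{k_i+1}|$ and $|\Delta_{k_{i+1}}|<|\Delta_{k_i+1}|$. Combining Lipschitz continuity of $f^{(1)}$ with the second majorization above reproduces the recursion of Theorem \ref{theo2},
\begin{eqnarray*}
|\Delta_{k_{i+1}+1}|\le \rho\mu\,|t_{k_{i+1}}-t^*|\,|\Delta_{k_{i+1}}|^{1-\alpha}<\rho\mu\,|\Delta_{k_i+1}|^{2-\alpha},
\end{eqnarray*}
after which the logarithmic change of variable $z_i=\ln|(\rho\mu)^{1/(1-\alpha)}\Delta_{k_i+1}|$ gives $z_{i+1}<(2-\alpha)z_i$ with $0<2-\alpha<1$, and hence $\limsup_i|\Delta_{k_i+1}|\le(\rho\mu)^{1/(\alpha-1)}$. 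Because every $|t_k-t^*|$ is dominated by the nearest $|\Delta_{k_i+1}|$, the sequence is eventually confined to a bounded neighbourhood of $t^*$.

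The main obstacle I anticipate is the telescoping estimate in Case 2: with $1-\alpha<0$ the inequalities flip direction under exponentiation, so reducing the mixed bound $\rho\mu|t_{k_{i+1}}-t^*|(|\Delta_{k_{i+1}}|+\delta)^{1-\alpha}$ to a clean power of $|\Delta_{k_i+1}|$ alone requires careful bookkeeping of which factor is large and which is small. The $\delta$-regularization does not resolve this obstacle directly, but it completely cleans up Case 1 by preventing the step-size factor from exploding as $|\Delta_k|\to 0$---precisely the pathology that (\ref{fogm21}) could not rule out---and this is what allows the two cases together to deliver the bounded-region conclusion for every $\rho>0$.
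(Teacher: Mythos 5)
Your proposal is correct and follows essentially the same route as the paper: the same dichotomy between finitely and infinitely many crossings of $t^*$, and in the infinite-crossing case the same key majorization $\left(\left|\Delta\right|+\delta\right)^{1-\alpha}\le\left|\Delta\right|^{1-\alpha}$ that reduces the recursion to the one already analysed in Theorem \ref{theo2}. Your Case 1 is in fact spelled out more carefully than the paper's (which simply defers to Theorem \ref{theo2}), since you note that the step-size factor now tends to the finite constant $\delta^{1-\alpha}$, but this is a refinement of detail rather than a different argument.
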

\begin{proof}
Similar to the proof of Theorem \ref{theo2}, if $t_k$ goes across $t^*$ for only finite times, it will converge to the extreme point asymptotically, whose bound is zero. If $t_k$ goes across $t^*$ for infinite times, then similar to (\ref{con20}), one can find a sequence of $k_i,~i=1,2,\cdots$ such that
 \begin{eqnarray} \label{con40}
\begin{array}{rl}
\left| {{\Delta _{{k_{i + 1}} + 1}}} \right| < &\hspace{-6pt}\rho \mu \left| {{\Delta _{{k_i} + 1}}} \right|{\left( {\left| {{\Delta _{{k_i} + 1}}} \right| + \delta } \right)^{1 - \alpha }}\\
 <&\hspace{-6pt} \rho \mu {\left| {{\Delta _{{k_i} + 1}}} \right|^{2 - \alpha }}
\end{array}
\end{eqnarray}
Thus similar to the analysis in Theorem \ref{theo2}, it is concluded that $|\Delta_{k_i+1}|$ will be bounded, which denotes that $t_k$ will converge to a bounded region of $t^*$. This completes the proof.
\end{proof}

From condition (\ref{con40}), it is concluded that the bound of the converge region will be smaller with $\delta$ added. And if $\delta$ is sufficient large, then $t_k$ will converge to the extreme point asymptotically all the time and following theorem holds.

\begin{theorem}\label{theo3}
For a convex function $f(t)$ satisfying Lipschitz continuous gradient, algorithm {\rm{(\ref{fogm22})}} will converge to the extreme point asymptotically with $\delta$ satisfying
\begin{eqnarray}\label{cond1}
\left| {\rho \mu \delta^{1 - \alpha }} \right|<1.
\end{eqnarray}
\end{theorem}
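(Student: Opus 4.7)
The plan is to mirror the two-case structure used in Theorems \ref{theo2} and \ref{theo41}, and show that condition (\ref{cond1}) upgrades the bounded-region conclusion of Theorem \ref{theo41} into asymptotic convergence. First I would split according to whether $t_k$ crosses $t^*$ only finitely many times or infinitely many times.

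In the finite-crossing case, after some index $N$ the iterates stay on one side of $t^*$, and convexity of $f$ forces $\{t_k\}_{k>N}$ to be monotone and bounded, hence convergent to some limit $t^{**}$. This gives $|\Delta_k|\to 0$ and, from the recursion (\ref{fogm22}), $\rho f^{(1)}(t_{k+1})(|\Delta_{k+1}|+\delta)^{1-\alpha}\to 0$. Since $1-\alpha<0$ the factor $(|\Delta_{k+1}|+\delta)^{1-\alpha}$ tends to $\delta^{1-\alpha}>0$, so $f^{(1)}(t_{k+1})\to 0$. Uniqueness of the extremum then forces $t^{**}=t^*$, yielding asymptotic convergence with no use of (\ref{cond1}).

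In the infinite-crossing case I would reuse the subsequence $k_i$ constructed in Theorem \ref{theo41} at which $f^{(1)}(t_{k_i})f^{(1)}(t_{k_i+1})<0$, together with the intermediate inequality (\ref{con40}):
\begin{eqnarray}
|\Delta_{k_{i+1}+1}| < \rho\mu|\Delta_{k_i+1}|\bigl(|\Delta_{k_i+1}|+\delta\bigr)^{1-\alpha}.
\end{eqnarray}
Here comes the key refinement enabled by (\ref{cond1}): since $1-\alpha<0$ and $\delta>0$, the factor $(|\Delta_{k_i+1}|+\delta)^{1-\alpha}\le \delta^{1-\alpha}$, so
\begin{eqnarray}
|\Delta_{k_{i+1}+1}| < \rho\mu\delta^{1-\alpha}\,|\Delta_{k_i+1}|.
\end{eqnarray}
Condition (\ref{cond1}) states $\rho\mu\delta^{1-\alpha}<1$, turning this into a strict geometric contraction, so $|\Delta_{k_i+1}|\to 0$. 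Finally, the convexity argument from Theorem \ref{theo2} that $|t_j-t^*|<|\Delta_{k_i+1}|$ for all $k_i+1\le j\le k_{i+1}$ transfers the geometric decay of the crossing gaps to the whole sequence, giving $|t_k-t^*|\to 0$.

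The main obstacle is really just the elementary monotonicity bound $(|\Delta_{k_i+1}|+\delta)^{1-\alpha}\le \delta^{1-\alpha}$, which works only because $1-\alpha<0$; once this is observed the rest is a direct recycling of the Theorem \ref{theo41} machinery. The only other delicate point is verifying in the finite-crossing case that $(|\Delta_{k+1}|+\delta)^{1-\alpha}$ stays bounded away from zero so that $f^{(1)}(t_{k+1})\to 0$ can actually be extracted from convergence of $\Delta_k$; this is immediate from $\delta>0$ and does not require (\ref{cond1}).
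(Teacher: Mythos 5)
Your proposal is correct and follows essentially the same route as the paper's proof: the same two-case split on finite versus infinite crossings, the same use of inequality (\ref{con40}) sharpened by the bound $(|\Delta_{k_i+1}|+\delta)^{1-\alpha}\le\delta^{1-\alpha}$, and the same transfer of the decay of $|\Delta_{k_i+1}|$ to the whole sequence. If anything you are slightly more careful than the paper, which only asserts $|\Delta_{k_{i+1}+1}|<|\Delta_{k_i+1}|$ where you correctly note the uniform geometric contraction factor $\rho\mu\delta^{1-\alpha}<1$ that actually forces convergence to zero.
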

\begin{proof}
If $t_k$ goes across $t^*$ for only finite times, it will converge to $t^*$ asymptotically due to the convexity of $f(t)$.

If $t_k$ goes across $t^*$ for infinite times, then one can find a sequence of $k_i,~i=1,2,\cdots$ such that
 \begin{eqnarray} \label{con41}
\begin{array}{rl}
\left| {{\Delta _{{k_{i + 1}} + 1}}} \right| < &\hspace{-6pt}\rho \mu \left| {{\Delta _{{k_i} + 1}}} \right|{\left( {\left| {{\Delta _{{k_i} + 1}}} \right| + \delta } \right)^{1 - \alpha }}\\
 <&\hspace{-6pt} \rho \mu \left| {{\Delta _{{k_i} + 1}}} \right|{\delta}^{1-\alpha}.
\end{array}
\end{eqnarray}
If $\left| {\rho \mu \delta^{1 - \alpha }} \right|<1$ holds, then $\left| {{\Delta _{{k_{i + 1}} + 1}}} \right| < \left| {{\Delta _{{k_i} + 1}}} \right|$ holds all the time, which denotes that $|\Delta_{k_i}|$ will converge to zero asymptotically. Thus $t_k$ must converge to $t^*$ asymptotically.

All the above analysis well implies the theorem.

%The proof of this theorem is straightforward. If we let $\delta$ sufficient large, then with $|t_{k+1}-t_k|\ll \delta$, FOGM (\ref{fogm22}) will change to
% \begin{eqnarray}
%{t_{k + 2}} = {t_{k+1}} - \rho {f^{\left( 1 \right)}}\left( {{t_{k+1}}} \right){\delta^{1 - \alpha }},
%\end{eqnarray}
%which can be seen as the conventional GM with step size $\rho {\delta}^{1-\alpha}$. Thus $t_k$ will converge to the exact extreme point asymptotically with $\rho$ carefully designed.
%
%Furthermore, consider (\ref{con23}) and it can be transformed into
%\begin{eqnarray}\label{con}
%\begin{array}{rl}
%\left| {\frac{{{\Delta _{i + 2}}}}{{{\Delta _{i + 1}}}}} \right| < &\hspace{-6pt}\left| {\rho \mu{\mu _2}(|\Delta _{i + 1}|+\delta)^{1 - \alpha }} \right|\\
%<&\hspace{-6pt}\left| {\rho \mu{\mu _2}\delta^{1 - \alpha }} \right|.
%\end{array}
%\end{eqnarray}
%If let $\delta$ satisfy that
%\begin{eqnarray}
%\left| {\rho\mu {\mu _2}\delta^{1 - \alpha }} \right|<1,
%\end{eqnarray}
%then $\Delta_k$ will converge to zero asymptotically, which establishes the theorem.
\end{proof}
\begin{remark}
If $\delta$ is too small, it may not guarantee the asymptotical convergence. Yet, if $\delta$ is too large, the convergence rate may be much slower. In fact, if $\delta>1$, then the step size $\rho \left(|\Delta_i|+\delta\right)^{1-\alpha}<\rho$ always holds with $1<\alpha<2$, which denotes that the convergence rate is slower than the conventional case.
\end{remark}
\begin{remark}\label{remark8}
$\delta  > {\left| {\rho {\mu }} \right|^{\frac{1}{{\alpha  - 1}}}}$ can guarantee the asymptotical convergence of FOGM (\ref{fogm22}), where ${\left| {\rho {\mu }} \right|^{\frac{1}{{\alpha  - 1}}}}$ is the upper bound of the convergent region shown in Theorem \ref{theo2}. Thus one can find that $\delta$ guarantees the asymptotical convergence after $\Delta_k$ goes into the bounded region.
%The scalar $\mu_2$ is very tough to estimate. If we discuss the situation when $t_k<t^*<t_{k+1}$, then $|t_{k+1}-t^*|<|t_{k+1}-t_k|$, which denotes that $\mu_2<1$. Thus ${\left| {\rho \lambda} \right|^{\frac{1}{{\alpha  - 1}}}}$ which is larger than ${\left| {\rho {\mu _2}\lambda} \right|^{\frac{1}{{\alpha  - 1}}}}$ can be seen as an estimation of the convergence accuracy. Moreover, $\left| {\rho\mu {\mu _2}\delta^{1 - \alpha }} \right|<\left| {\rho\mu \delta^{1 - \alpha }} \right|<1$, i.e., $\delta  > {\left| {\rho {\mu }} \right|^{\frac{1}{{\alpha  - 1}}}}$ will guarantee the asymptotical convergence of FOGM (\ref{fogm22}).
\end{remark}

Furthermore, $|\Delta_k|$ will soon become smaller than $1$ since step size $\rho$ is usually set sufficiently small to guarantee the convergence property. Thus FOGM (\ref{fogm22}) with $1<\alpha<2$ can usually present a faster convergence rate when $t_k$ is close to the extreme point. Yet, if some extreme conditions such as step size $\rho$ is large and initial iterative point is far away from the extreme point are considered, $|\Delta_k|$ will be larger than $1$ at the beginning and FOGM (\ref{fogm22}) with $1<\alpha<2$ may converge slower than the $0<\alpha\le1$ case. Considering the potential faster convergence rate at the beginning for the $0<\alpha<1$ case, following switching FOGM can be obtained
 \begin{eqnarray}\label{fogm23}
{t_{k + 2}} = {t_{k+1}} - \rho {f^{\left( 1 \right)}}\left( {{t_{k+1}}} \right){\left(\left| {{t_{k + 1}} - {t_k}} \right|+\delta \right)^{1 - \alpha }},
\end{eqnarray}
where $\alpha$ and $\delta$ are set as $\alpha  > 1,\delta  \ge {\left| {\rho \mu } \right|^{\frac{1}{{\alpha  - 1}}}}$ thereafter once $\left| {{t_{k }} - {t_{k-1}}} \right| < 1$ or ${f^{(1)}}({t_2}){f^{(1)}}({t_{k +}}) < 0$ holds at some step $k+1$ and set as $\alpha<1$ and $\delta=0$ for the other cases.
% \begin{eqnarray}
%\left\{ \begin{array}{l}
%\alpha  > 1,\delta  > {\left| {\rho \mu } \right|^{\frac{1}{{\alpha  - 1}}}};~once~ \left| {{t_{k }} - {t_{k-1}}} \right| < 1\\
%\hspace{90pt}or~{f^{(1)}}({t_2}){f^{(1)}}({t_{k +}}) < 0\\
%\hspace{90pt}~ hold ~at ~step~ k+1,\\
%\alpha  < 1,\delta  = 0;\hspace{32pt}else.
%\end{array} \right.
%\end{eqnarray}
%if $|t_{k+1}-t_k|<1$ or $f^{(1)}(t_2)f^{(1)}(t_{k+1})<0$, then $\alpha$ is set larger than 1 and $\delta$ is set to guarantee the asymptotical convergence. If $|t_{k+1}-t_k|>1$ and $f^{(1)}(t_2)f^{(1)}(t_{k+1})>0$, then $\alpha$ is set smaller than 1 and $\delta$ is set as zero.
\begin{theorem}\label{theorem5}
For a convex function satisfying Lipschitz continuous gradient, modified FOGM (\ref{fogm23}) will guarantee a global asymptotical convergence all the time.
\end{theorem}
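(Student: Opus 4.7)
The plan is to exploit the two-phase structure imposed by the switching rule: an \emph{initial} phase using $\alpha<1$, $\delta=0$ (so (\ref{fogm23}) reduces to (\ref{fogm21})), and a \emph{post-switch} phase using $\alpha>1$, $\delta\ge|\rho\mu|^{1/(\alpha-1)}$ (so (\ref{fogm23}) becomes (\ref{fogm22}) with the parameter condition of Theorem~\ref{theo3}). I would establish (i) that the initial phase must terminate in finitely many iterations by triggering one of the two switching conditions, and (ii) that once inside the post-switch phase Theorem~\ref{theo3} delivers asymptotic convergence to $t^*$. Concatenating these two facts gives global asymptotic convergence from any initial pair $(t_0,t_1)$.

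For (i), I would argue by contradiction: suppose neither switching condition ever triggers, so $|t_k-t_{k-1}|\ge 1$ for all $k\ge 2$ and $f^{(1)}(t_2)f^{(1)}(t_{k+1})\ge 0$ for all $k$. The second negation, together with convexity and $f^{(1)}(t_2)\ne 0$ (otherwise $t_2=t^*$ and we are done), forces every iterate $t_k$ to lie on one fixed side of $t^*$, so the sign of $f^{(1)}(t_k)$ is constant. The update in (\ref{fogm23}) with $\delta=0$ is then a step of positive length in the monotone direction toward $t^*$, so $\{t_k\}$ is monotone and bounded by $t^*$, hence convergent. Consequently $|\Delta_k|\to 0$, contradicting $|\Delta_k|\ge 1$ for all $k$. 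A more direct bookkeeping argument is also available: with $\alpha<1$ and $|\Delta_k|\ge 1$ each step has length at least $\rho|f^{(1)}(t_{k+1})|>0$ on the strictly smaller side of $t^*$, so $t_k$ must overshoot $t^*$ after finitely many steps, again a contradiction. Either way there is a first index $k^\star$ at which the switch occurs.

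For (ii), from step $k^\star$ onward the iteration follows (\ref{fogm22}) with the fixed pair $(\alpha,\delta)$ satisfying $1<\alpha<2$ and $\rho\mu\delta^{1-\alpha}\le 1$, which is precisely condition (\ref{cond1}) of Theorem~\ref{theo3} (treating the current $(t_{k^\star-1},t_{k^\star})$ as the starting pair). Invoking Theorem~\ref{theo3} on the tail sequence yields $t_k\to t^*$. Since the initial phase contributes only finitely many iterations, the full trajectory asymptotically converges to $t^*$.

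The principal obstacle is the finiteness of the initial phase, because the dynamics with $\alpha<1$ and large $|\Delta_k|$ has an amplified effective step $\rho|\Delta_k|^{1-\alpha}>\rho$ and is not \emph{a priori} well-behaved; the argument has to use convexity plus the assumed absence of crossing to enforce one-sided monotonicity, and then extract $|\Delta_k|\to 0$ purely from boundedness rather than from any explicit contraction estimate. A secondary care-point is the boundary case $\delta=|\rho\mu|^{1/(\alpha-1)}$: Theorem~\ref{theo3} is stated with the strict inequality (\ref{cond1}), so if needed I would either sharpen the switching rule to the strict inequality $\delta>|\rho\mu|^{1/(\alpha-1)}$ or observe, as in Remark~\ref{remark8}, that the boundary choice still forces the contraction $|\Delta_{k_{i+1}+1}|<|\Delta_{k_i+1}|$ once the iterates enter the region where the additive $\delta$ dominates, preserving the asymptotic conclusion.
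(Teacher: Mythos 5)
Your proposal is correct and follows essentially the same route as the paper: show the switch must trigger in finitely many steps (if $|\Delta_k|<1$ never occurs, convexity forces a crossing of $t^*$, activating the sign condition), then invoke Theorem~\ref{theo3} on the post-switch tail. Your treatment is somewhat more careful than the paper's — notably in spelling out the contradiction for phase (i) and in flagging the boundary case $\delta=|\rho\mu|^{1/(\alpha-1)}$ versus the strict inequality (\ref{cond1}) — but the underlying argument is the same.
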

\begin{proof}
If $|\Delta_k|<1$ or $f^{(1)}(t_2)f^{(1)}(t_{k})<0$ holds at step $k$, then FOGM (\ref{fogm23}) is switched to $1<\alpha<2$ case and $\delta$ can guarantee a asymptotical convergence with $1<\alpha<2$ as shown in Theorem \ref{theo3}.

Moreover, either of the conditions $|\Delta_k|<1$ and $f^{(1)}(t_2)f^{(1)}(t_{k})<0$ must happen. If $|\Delta_k|<1$ never happens, then $t_k$ must go across the extreme point from either side of $t^*$ due to the convexity of $f(t)$, which denotes that $f^{(1)}(t_2)f^{(1)}(t_{k})<0$ must hold at some step $k$. Thus, FOGM (\ref{fogm23}) must converge to the extreme point asymptotically with arbitrary step size $\rho$.
\end{proof}
\begin{remark}
Switching FOGM (\ref{fogm23}) shows a faster convergence rate than conventional FOGM with $0<\alpha<2$. Though any step size $\rho$ can be designed, it is better to design a suitable step size $\rho$ with which $t_k$ will not go across the extreme point significantly. If $\rho$ is too large, then $f^{(1)}(t_2)f^{(1)}(t_{k})<0$ will hold at step $k$ where $t_k$ is far away from $t^*$. But FOGM (\ref{fogm23}) has already been switched to $1<\alpha<2$ case, which may result in a slower convergence rate as discussed in Subsection \ref{subc}.
\end{remark}
\begin{remark}
We have to address here that the condition $f^{(1)}(t_2)f^{(1)}(t_{k})<0$ is to avoid the divergence of FOGM (\ref{fogm23}). In fact, such condition can be omitted to obtain an even faster convergence rate. But the convergence property of FOGM (\ref{fogm23}) may not be guaranteed.
\end{remark}

\section{Some extensive discussion}\label{sec9}
Generally, many convex functions do not satisfy Lipschitz continuous gradient or are not strong convex. Thus in this section, we will extend such conventional concepts to a more general case.

\begin{theorem}\label{theo51}
For a convex function satisfying $p$-order Lipschitz continuous gradient, FOGM (\ref{fogm21}) with $p<\alpha<1+p$ will always converge to a bounded region of $t^*$ for arbitrary $\rho$. Moreover, the upper bound is ${\left| {\rho {\mu }} \right|^{\frac{1}{{\alpha  - p}}}}$.
\end{theorem}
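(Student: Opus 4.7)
The plan is to mirror the argument of Theorem \ref{theo2}, replacing the ordinary Lipschitz bound $|f^{(1)}(x)-f^{(1)}(y)|\le\mu|x-y|$ by its $p$-order analogue $|f^{(1)}(x)-f^{(1)}(y)|\le\mu|x-y|^p$ throughout. First I would set $\Delta_{k+1}=t_{k+1}-t_k$ and rewrite the iteration as $|\Delta_{k+2}|=|\rho f^{(1)}(t_{k+1})\Delta_{k+1}^{1-\alpha}|$. Subtracting $f^{(1)}(t^*)=0$ inside and using the $p$-order Lipschitz assumption gives the basic estimate
\begin{eqnarray}
|\Delta_{k+2}|\le \rho\mu|t_{k+1}-t^*|^{p}\,|\Delta_{k+1}|^{1-\alpha},
\end{eqnarray}
which is the exact $p$-order replacement for inequality (\ref{conyy}).

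Next I would split into the same two cases as in Theorem \ref{theo2}. If $t_k$ crosses $t^*$ only finitely often, the convexity of $f$ forces $t_k$ to be monotone and bounded for large $k$, hence convergent; then $\Delta_k\to 0$ and, since $|\Delta_k|^{1-\alpha}$ is nonzero for $p<\alpha<1+p$, convergence of the iteration forces $f^{(1)}(t_{k+1})\to 0$, so $t_k\to t^*$ and the claimed bound holds trivially. If $t_k$ crosses $t^*$ infinitely often, I extract a subsequence $k_i$ with $f^{(1)}(t_{k_i})f^{(1)}(t_{k_i+1})<0$; as in the proof of Theorem \ref{theo2}, this crossing condition yields $|t_{k_i+1}-t^*|<|\Delta_{k_i+1}|$, and the bracketing of successive crossings gives $|\Delta_{k_{i+1}}|\le|\Delta_{k_i+1}|$. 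Plugging these two facts into the $p$-order estimate produces the key contraction
\begin{eqnarray}
|\Delta_{k_{i+1}+1}|<\rho\mu\,|\Delta_{k_i+1}|^{\,1+p-\alpha}.
\end{eqnarray}

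Finally I would perform the same logarithmic substitution used in Theorem \ref{theo2}, but adapted to the new exponent: let $z_i=\ln\bigl|(\rho\mu)^{1/(p-\alpha)}\Delta_{k_i+1}\bigr|$. A direct calculation (multiply both sides of the contraction by $(\rho\mu)^{1/(p-\alpha)}$ and collect exponents via $1/(p-\alpha)+1=(1+p-\alpha)/(p-\alpha)$) turns the contraction into $z_{i+1}<(1+p-\alpha)z_i$. Because $p<\alpha<1+p$ gives $0<1+p-\alpha<1$, this linear contraction forces $\limsup_i z_i\le 0$, which is exactly $\limsup_i|\Delta_{k_i+1}|\le(\rho\mu)^{1/(\alpha-p)}$. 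Combining this with the convexity-based sandwich $|t_j-t^*|<|\Delta_{k_i+1}|$ for $k_i+1\le j\le k_{i+1}$ (as in Theorem \ref{theo2}) then transfers the bound from $|\Delta_{k_i+1}|$ to $|t_k-t^*|$, yielding the stated upper bound $(\rho\mu)^{1/(\alpha-p)}$.

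The only real obstacle I anticipate is bookkeeping with the exponents: one must check carefully that the choice $(\rho\mu)^{1/(p-\alpha)}$ is the correct normalising factor so that the self-similar form $z_{i+1}<(1+p-\alpha)z_i$ emerges, and that the range $p<\alpha<1+p$ is precisely what forces the contraction factor to lie in $(0,1)$. Beyond that algebraic verification, every structural step—the case split, the crossing-subsequence trick, the convexity-based transfer from $|\Delta_{k_i+1}|$ to $|t_k-t^*|$—is directly inherited from the proof of Theorem \ref{theo2} and requires no new idea.
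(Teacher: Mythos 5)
Your proposal is correct and follows exactly the route the paper intends: the paper's own ``proof'' of this theorem is the single remark that it ``can be obtained in the same way as Theorem \ref{theo2}'', and your adaptation---replacing $|t_{k+1}-t^*|$ by $|t_{k+1}-t^*|^p$ to get the contraction $|\Delta_{k_{i+1}+1}|<\rho\mu|\Delta_{k_i+1}|^{1+p-\alpha}$, normalising by $(\rho\mu)^{1/(p-\alpha)}$, and using $0<1+p-\alpha<1$---is precisely that argument, with the exponent bookkeeping done correctly to yield the stated bound $(\rho\mu)^{1/(\alpha-p)}$.
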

The proof of Theorem \ref{theo51} can be obtained in the same way as Theorem \ref{theo2}. In fact, it may be tough or even impossible for a function to satisfy $p$-th order Lipschitz continuous gradient globally. Yet, if the condition holds for arbitrary $x,~y\in\{z||z-t^*|>R\}$ where $R$ is a positive scalar, then FOGM (\ref{fogm21}) can still converge to a bounded region for arbitrary $\rho$.

%Moreover, the upper bound can be described as $\max\{{\left| {\rho {\mu }} \right|^{\frac{1}{{\alpha  - p}}}},~R\}$.

\begin{theorem}\label{theo52}
For a $p$-order strong convex function, a necessary condition for the asymptotical convergence of FOGM (\ref{fogm21}) is $\alpha\le p$.
\end{theorem}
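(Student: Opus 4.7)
The plan is to prove the contrapositive: assuming $\alpha>p$ together with $t_k\to t^*$, I would derive a contradiction in the spirit of Theorem \ref{corollary1}, but with the $p$-order strong convex bound in place of the linear one. First I would use $|f^{(1)}(x)-f^{(1)}(y)|\ge\lambda|x-y|^p$ with $y=t^*$ (so that $f^{(1)}(t^*)=0$) to obtain $|f^{(1)}(t_{k+1})|\ge\lambda|t_{k+1}-t^*|^p$, and substitute into FOGM (\ref{fogm21}) to produce the key recursive lower bound
\begin{equation*}
|\Delta_{k+2}|\;\ge\;\rho\lambda\,|t_{k+1}-t^*|^p\,|\Delta_{k+1}|^{1-\alpha},
\end{equation*}
which is the natural $p$-order analogue of (\ref{conzz}).

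Next I would generalize Lemma \ref{lemma1}: if $t_k$ eventually stays on one side of $t^*$, convexity forces $|t_k-t^*|$ to decrease monotonically with $|\Delta_{k+1}|\le|t_k-t^*|$, and the recursion above eventually produces $\rho\lambda|t_{k+1}-t^*|^{p-1}|\Delta_{k+1}|^{1-\alpha}>2$ because the combined exponent $p-\alpha$ is strictly negative; this contradicts $|t_{k+2}-t^*|\ge 0$ and hence forces $t_k$ to cross $t^*$ infinitely often. I would then extract a subsequence $k_i$ with $f^{(1)}(t_{k_i})f^{(1)}(t_{k_i+1})<0$, for which $|\Delta_{k_i+1}|=|t_{k_i}-t^*|+|t_{k_i+1}-t^*|$, so that at least one of the two distances is $\ge|\Delta_{k_i+1}|/2$. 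In the principal sub-case $|t_{k_i+1}-t^*|\ge|\Delta_{k_i+1}|/2$, the key inequality collapses to $|\Delta_{k_i+2}|\ge\rho\lambda\,2^{-p}\,|\Delta_{k_i+1}|^{\,p+1-\alpha}$; since $\alpha>p$ makes $p+1-\alpha<1$, the ratio $|\Delta_{k_i+2}|/|\Delta_{k_i+1}|$ diverges as $|\Delta_{k_i+1}|\to 0$, contradicting $\Delta_k\to 0$. In the complementary sub-case $|t_{k_i}-t^*|\ge|\Delta_{k_i+1}|/2$ I would shift the index back by one and apply the same recursion between steps $k_i-1,k_i,k_i+1$ to obtain the identical divergent ratio.

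The main obstacle I anticipate is the generalized non-crossing lemma: while the parameter choice $\varepsilon=(\rho\lambda/2)^{1/(\alpha-p)}$ should work in analogy with Lemma \ref{lemma1}, isolating the decisive $|t_{k+1}-t^*|^{p-\alpha}$ factor requires a uniform lower bound of the form $|t_{k+1}-t^*|\ge c|\Delta_{k+1}|$ along a sub-subsequence of the monotone branch, which must be established carefully from the forced decay pattern. Once this step is secured, the remainder of the argument is a direct mimicry of Theorem \ref{corollary1} with $p$ replacing $1$ throughout the algebraic manipulations.
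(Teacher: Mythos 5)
Your overall route is the one the paper intends: the paper's entire proof of this theorem is the single sentence that it ``can be obtained in the same way as Theorem \ref{corollary1}'', and you reproduce that template with $p$ in place of $1$ --- the recursion $|\Delta_{k+2}|\ge\rho\lambda|t_{k+1}-t^*|^p|\Delta_{k+1}|^{1-\alpha}$, the generalized crossing lemma, and the threshold $\varepsilon=(\rho\lambda/2)^{1/(\alpha-p)}$ are exactly the intended analogues. Your half-splitting device at crossing indices is in fact an attempt to repair a weak point that the paper itself glosses over: its chain $|t_{k+1}-t^*|\,|\Delta_{k+1}|^{1-\alpha}>|t_{k+1}-t^*|^{1-\alpha}|\Delta_{k+1}|$ is equivalent to $|t_{k+1}-t^*|>|\Delta_{k+1}|$ and therefore points the wrong way precisely in the crossing case it is invoked for, so the extra care is warranted.

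Two gaps remain, however. First, your concluding contradiction is not yet a contradiction. In the regime $0<p+1-\alpha<1$ the bound $|\Delta_{k_i+2}|\ge\rho\lambda 2^{-p}|\Delta_{k_i+1}|^{p+1-\alpha}$ has both sides tending to zero, and a divergent ratio along the sparse subsequence $\{k_i\}$ is perfectly compatible with $\Delta_k\to 0$ (for instance $|\Delta_{k_i+1}|=e^{-i^2}$, $|\Delta_{k_i+2}|=e^{-i}$ satisfies the bound for large $i$ while both vanish). The template of Theorem \ref{corollary1} avoids this by asserting the doubling $|\Delta_{k+2}|>2|\Delta_{k+1}|$ at \emph{every} step $k>N$, so that it compounds into genuine divergence of $|\Delta_k|$ and contradicts $|\Delta_k|<2\varepsilon$; your subsequence-only doubling loses the compounding, and you would need to propagate the growth through the monotone stretches between $k_i+1$ and $k_{i+1}$ (where convexity controls $|\Delta_j|$ from above by $|\Delta_{k_i+1}|$) to recover a genuine divergence. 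Second, the obstacle you flag yourself --- the missing uniform lower bound $|t_{k+1}-t^*|\ge c|\Delta_{k+1}|$ in the generalized non-crossing lemma --- is a real hole, since on a monotone branch $|\Delta_{k+1}|=|t_k-t^*|-|t_{k+1}-t^*|$ can dwarf $|t_{k+1}-t^*|$. Until both points are closed, what you have is a faithful but incomplete rendering of the paper's (itself rather loose) proof sketch.
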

The proof of Theorem \ref{theo52} can be obtained in the same way as Theorem \ref{corollary1}. Generally, it is tough for a function to satisfy $p$-th order strong convex globally. In fact, for a function which is $p$-order strong convex around $t^*$, Theorem \ref{theo52} still holds.

The conditions of local $p$-order strong convex and $p$-order Lipschitz continuous gradient are generally tough to determine. Yet, Theorem \ref{theo51} and \ref{theo52} do give a general form suitable for more convex functions and deepen our insight of GM.
\begin{corollary}\label{coro2}
For a convex function which is $p$-order Lipschitz continuous gradient where $p<1$ for $|t-t^*|>R$, $R>0$, then the conventional GM will never go to infinity for arbitrary step size $\rho>0$.
\end{corollary}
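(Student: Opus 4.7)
The plan is to bound $|t_k - t^*|$ uniformly in $k$, by combining the $p$-order Lipschitz gradient bound outside the $R$-ball with a per-step overshoot analysis of the conventional GM recursion $t_{k+1} = t_k - \rho f^{(1)}(t_k)$.

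First I would upgrade the local $p$-Lipschitz condition into a global growth estimate of the form $|f^{(1)}(t)| \le A + B |t - t^*|^p$ valid for every $t$. For $|t - t^*| > R$, choose a reference point $y$ on the sphere $|y - t^*| = R$ lying between $t$ and $t^*$; the hypothesis gives $|f^{(1)}(t) - f^{(1)}(y)| \le \mu (|t - t^*| - R)^p$, and adding the maximum $G$ of $|f^{(1)}|$ on that sphere yields $|f^{(1)}(t)| \le \mu |t - t^*|^p + G$. Inside the compact ball $|t - t^*| \le R$, continuity of $f^{(1)}$ (guaranteed for a smooth convex function) supplies a finite bound that is absorbed into $A$.

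Next I would analyze the step. Let $a_k := |t_k - t^*|$. Because $f$ is convex and $t^*$ is its extremum, $f^{(1)}(t_k)$ has the same sign as $t_k - t^*$, so each GM step points toward $t^*$. Either the step does not cross $t^*$, in which case $a_{k+1} \le a_k$ trivially; or the iterate overshoots, in which case $a_{k+1} = \rho |f^{(1)}(t_k)| - a_k \le \rho A + \rho B a_k^p - a_k$. The decisive observation is that, since $p < 1$, the function $g(a) := \rho A + \rho B a^p - a$ tends to $-\infty$ as $a \to \infty$ and hence attains a finite maximum $M$ on $[0, \infty)$ for any $\rho > 0$. Combining the two cases gives $a_{k+1} \le \max(a_k, M)$, and induction yields $a_k \le \max(a_0, M)$, so the iterates never escape to infinity.

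The main obstacle, and the whole point of the corollary, is identifying the mechanism that prevents divergence for an \emph{arbitrary} step size $\rho$. This is precisely the sublinear $p < 1$ growth of the gradient: it keeps the overshoot term $\rho B a_k^p - a_k$ bounded, an effect that breaks down for $p \ge 1$, where the step can grow in proportion to $a_k$ and amplify the overshoot at each iteration. A secondary subtlety is the handoff between the interior and exterior of the $R$-ball, which is handled cleanly once the global bound $|f^{(1)}(t)| \le A + B|t-t^*|^p$ is in hand.
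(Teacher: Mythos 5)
Your proof is correct, but it takes a genuinely different route from the paper. The paper treats Corollary \ref{coro2} as the $\alpha=1$ instance of Theorem \ref{theo51}, whose (omitted) proof mirrors Theorem \ref{theo2} and its local-Lipschitz corollary: one isolates the subsequence of steps at which the iterate crosses $t^*$, derives the recursion $|\Delta_{k_{i+1}+1}| < \rho\mu\,|\Delta_{k_i+1}|^{1+p-\alpha}$, takes logarithms to obtain the contraction $z_{i+1} < (1+p-\alpha)z_i$, and concludes $\limsup_i |\Delta_{k_i+1}| \le (\rho\mu)^{1/(\alpha-p)}$, which then bounds $|t_j - t^*|$ between crossings. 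You instead establish the global sublinear growth bound $|f^{(1)}(t)| \le A + B|t-t^*|^p$ and run a per-step overshoot estimate: in the non-crossing case $a_{k+1}\le a_k$, in the overshoot case $a_{k+1} \le \rho A + \rho B a_k^p - a_k \le M := \sup_{a\ge 0} g(a) < \infty$ because $p<1$, whence $a_k \le \max(a_0, M)$ by induction. Your argument is more elementary (no crossing subsequence, no logarithmic transformation) and makes the mechanism --- sublinear gradient growth caps the overshoot --- very explicit; what it gives up is the sharper asymptotic bound $(\rho\mu)^{1/(1-p)}$ independent of the initial condition that the paper's contraction argument delivers, since your $M$ and hence your bound can depend on $a_0$. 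Two trivial technicalities you should patch: the hypothesis gives the $p$-Lipschitz estimate only for points strictly outside the $R$-ball, so your reference point should be taken at radius $R+\epsilon$ (or obtained by a limiting argument) rather than exactly on the sphere; and the identity $a_{k+1} = \rho|f^{(1)}(t_k)| - a_k$ is the one-dimensional overshoot identity, which is fine here since the paper works with scalar $t$ throughout.
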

\begin{corollary}\label{coro3}
For a convex function which is $p$-order strong convex around the extreme point where $p>1$, then the conventional GM cannot guarantee a asymptotical convergence to the extreme point but converge to a bounded region about the extreme point.
\end{corollary}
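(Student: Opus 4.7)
The plan is to mirror the proof of Theorem~\ref{corollary1} under the relaxed hypothesis of $p$-order strong convexity with $p>1$, specialized to the conventional GM, which is exactly the $\alpha=1$ instance of FOGM~(\ref{fogm21}). The overall strategy is a proof by contradiction: assume $t_k\to t^*$ asymptotically and show that the consecutive step magnitudes $|\Delta_k|$ cannot vanish, forcing only bounded-region convergence as in Theorem~\ref{theo2}.

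First I would set $\alpha=1$ in (\ref{fogm21}) to recover $t_{k+1}=t_k-\rho f^{(1)}(t_k)$, and record the one-step lower bound $|\Delta_{k+1}|=\rho|f^{(1)}(t_k)|\ge \rho\lambda|t_k-t^*|^p$ obtained from the local $p$-order strong convex hypothesis around $t^*$. Under the contradiction hypothesis, pick any $\varepsilon>0$ and an integer $N$ such that $|t_k-t^*|<\varepsilon$ for all $k>N$, with $\varepsilon$ small enough that the $p$-order local bound is in force. Then, following the case split used in Theorem~\ref{corollary1}, separate the late iterates into a crossing regime $f^{(1)}(t_k)f^{(1)}(t_{k+1})<0$ (in which $|t_{k+1}-t^*|<|\Delta_{k+1}|$) and a same-side regime $f^{(1)}(t_k)f^{(1)}(t_{k+1})>0$ (in which $|t_k-t^*|>|\Delta_{k+1}|$ and $|t_k-t^*|>|t_{k+1}-t^*|$). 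In each regime, chain the inequality $|\Delta_{k+2}|\ge \rho\lambda|t_{k+1}-t^*|^p$ to produce a growth ratio $|\Delta_{k+2}|\ge c|\Delta_{k+1}|$ with some $c>1$, so that iterating yields $|\Delta_k|\to\infty$, contradicting $t_k\to t^*$ and forcing $t_k$ to stabilize only in a bounded neighborhood of $t^*$.

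The main obstacle is calibrating the threshold $\varepsilon$ so that the growth factor $c>1$ actually emerges. By analogy with the choice $\varepsilon=(\rho\lambda/2)^{1/(\alpha-1)}$ used in Theorem~\ref{corollary1}, the natural candidate here is $\varepsilon=(\rho\lambda/2)^{1/(p-1)}$, which activates the overshoot regime $\rho\lambda|t_k-t^*|^{p-1}>2$. Because $p>1$ the lower bound $|t_k-t^*|^p$ decays super-linearly as iterates approach $t^*$, so the overshoot is driven by iterates on the \emph{outside} of this threshold rather than directly near $t^*$; showing that the contradiction hypothesis indeed compels the sequence to remain in the overshoot regime for arbitrary $\rho>0$ is the delicate step, and it is where the restriction $p>1$ (as opposed to the $p\le 1$ case, in which the same manipulation fails and GM genuinely converges) is essential. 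Once this calibration is secured, the remainder is a routine transcription of the Theorem~\ref{corollary1} argument.
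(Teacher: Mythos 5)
The step you flag as ``delicate'' is in fact unbridgeable, and it is where the proposal fails. With $\alpha=1$ the available lower bound is $|\Delta_{k+1}|=\rho|f^{(1)}(t_k)|\ge\rho\lambda|t_k-t^*|^{p}$, and the overshoot mechanism of Theorem~\ref{corollary1} needs this to dominate $|t_k-t^*|$ itself, i.e.\ it needs $\rho\lambda|t_k-t^*|^{p-1}>2$. For $p>1$ this holds only when $|t_k-t^*|>(2/(\rho\lambda))^{1/(p-1)}$, so under the contradiction hypothesis $t_k\to t^*$ the iterates eventually \emph{exit} the overshoot regime and no growth factor $c>1$ ever appears; the chain $|\Delta_{k+2}|\ge\rho\lambda|t_{k+1}-t^*|^{p}$ together with $|t_{k+1}-t^*|<|\Delta_{k+1}|$ and $p>1$ gives $|t_{k+1}-t^*|^{p}\le|t_{k+1}-t^*|^{p-1}|\Delta_{k+1}|=o(1)\cdot|\Delta_{k+1}|$, which is the opposite of what you need. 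No calibration of $\varepsilon$ can repair this, because the statement is false as printed: $f(t)=t^2$ is $p$-order strong convex on any bounded neighbourhood of its minimum for every $p\ge1$ (shrink $\lambda$), and $f(t)=t^4$ has critical order $p=3>1$, yet the conventional GM converges asymptotically to the extreme point for both.

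The inequality on $p$ in Corollary~\ref{coro3} should be reversed. Theorem~\ref{theo52} makes $\alpha\le p$ necessary for asymptotic convergence, so the conventional GM ($\alpha=1$) is obstructed precisely when $p<1$; this is also the situation of $f(t)=|t-c|^{4/3}$ in Example~\ref{ex6} (critical order $1/3$), the very function the remark following the corollary invokes. With $p<1$ your transcription of the Theorem~\ref{corollary1} argument does close: take $\varepsilon=(\rho\lambda/2)^{1/(1-p)}$, so that $|t_k-t^*|<\varepsilon$ forces $\rho\lambda|t_k-t^*|^{p-1}>2$ (the exponent $p-1$ is now negative, so proximity to $t^*$ activates the overshoot), and $|\Delta_{k+2}|>2|\Delta_{k+1}|$ follows, contradicting $\Delta_k\to0$. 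Note finally that your closing parenthetical has the two regimes swapped: it is for $p<1$ that the manipulation succeeds and GM fails to converge asymptotically, and for $p>1$ that the manipulation fails and GM can genuinely converge.
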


\begin{remark}
Corollary \ref{coro2} and \ref{coro3} demonstrate that the conventional GM may still exist some questions when handling some specific convex functions, such as $f(t)={|t|}^{\frac{4}{3}}$. Yet, to the largest knowledge of the authors, the questions have not been reported before. Thus, introducing FOGM not only can improve the convergence performances of GM, but also is the natural extension of conventional GM. And it does provide detailed analysis when the conventional GM is used for a non-strong convex function.
\end{remark}

\section{Illustrative examples}\label{sec5}
In this section, we will present some typical examples to demonstrate the conclusions of proposed theorems.

\begin{example}\label{ex1}
Consider the simplest strong convex function $f(t)=(t-c)^2$ which satisfying Lipschitz continuous gradient and $\mu=\lambda=2$. Take $\rho=0.01,~c=3,~t_1=-1$, and $t_2=0$ when simulating.
\begin{figure}
\centering
\includegraphics[width=0.5\textwidth]{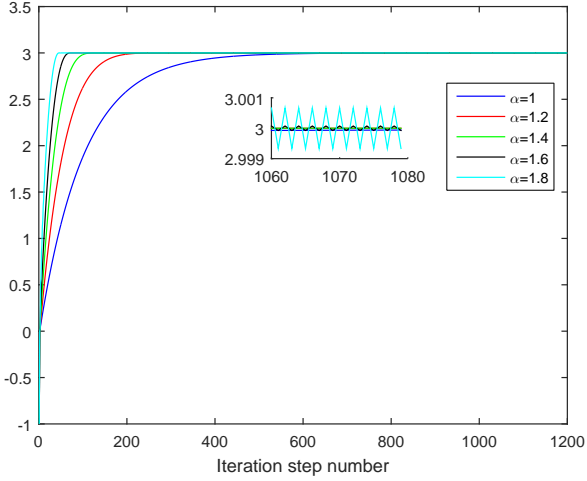}
\caption{Iteration results in Example \ref{ex1}}\label{f1}
\end{figure}

\begin{table}[hb]
\begin{center}
\caption{Some typical points with different $\alpha$ in Example \ref{ex1}}\label{tab1}
\begin{tabular}{c|cccccc}
\hline
\backslashbox{$t_k-t^*$}{Step $k$} & 1060 & 1061 & 1062 & 1063 & 1064 &1065\\\hline
$\alpha=1$ $(\times 10^{-5})$ & 7.23 &7.16 & 7.09 & 6.95& 6.88 &6.81\\\hline
$\alpha=1.2$ $(\times 10^{-12})$ & -1.56 &1.56 &-1.56 & 1.56& -1.56 &1.56\\\hline
$\alpha=1.4$ $(\times 10^{-7})$& -8.84 &8.84 &-8.84 & 8.84& -8.84 &8.84\\\hline
$\alpha=1.6$ $(\times 10^{-5})$& -7.31 &7.31 &-7.31 & 7.31& -7.31 &7.31\\\hline
$\alpha=1.8$ $(\times 10^{-4})$& -6.65 &6.65 &-6.65 &6.65& -6.65 &6.65\\\hline
\end{tabular}
\end{center}
\end{table}

Results are shown in Fig. \ref{f1} and TABLE \ref{tab1}. Following conclusions can be derived:
\begin{itemize}
    \item[1)] A larger $\alpha$ gives a faster convergence rate from Fig. \ref{f1}.
    \item[2)] With $\alpha>1$, FOGM (\ref{fogm21}) cannot converge to the extreme point asymptotically but a small neighborhood of the extreme point. Moreover, the larger $\alpha$ always means a worse convergence accuracy as shown in TABLE \ref{tab1}.
    \item[3)] Calculate the value of ${\left| {\rho \mu} \right|^{\frac{1}{{\alpha  - 1}}}}$ for $\alpha=1.2,~\alpha=1.4,~\alpha=1.6$ and $\alpha=1.8$ and the results are $3.2\times 10^{-9},~5.7\times 10^{-5},~1.5\times 10^{-3}$, and $7.5\times 10^{-3}$, respectively. Though the estimated bounds are larger than the real ones, it does give some information about convergence accuracy in advance.
\end{itemize}
\end{example}
\begin{example}\label{ex2}
Consider the same function in Example \ref{ex1}, take $\alpha=1.5$, $c=3,~t_1=-1$, and $t_2=0$ when simulating.

\begin{figure}
\centering
\includegraphics[width=0.5\textwidth]{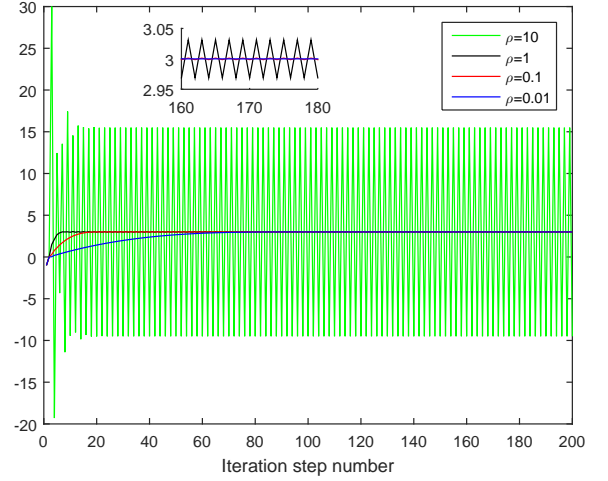}
\caption{Iteration results in Example \ref{ex2}}\label{f2}
\end{figure}
\end{example}

Results are shown in Fig. \ref{f2}. When $\rho=10$, the conventional GM has already gone divergent, which is not shown here. Yet, FOGM (\ref{fogm21}) never goes divergent but converges to a neighbourhood of the extreme point. Moreover, a larger $\rho$ means a worse convergence accuracy. No matter how large is the bound of convergence accuracy, it will never go to infinity, which well demonstrates the conclusion of Theorem \ref{theo2}.

\begin{example}\label{ex3}
Consider the same function in Example \ref{ex1} and the modified FOGM (\ref{fogm22}). Take $\alpha=1.5$, $\rho=0.1$, $c=3,~t_1=-1$, and $t_2=0$. From the analysis in Remark \ref{remark8}, $\delta$ can be set as ${\left| {\rho {\mu }} \right|^{\frac{1}{{\alpha  - 1}}}}$ with $\mu=2$ and $\alpha=1.5$. Thus take $\delta=0.04$ when simulating.
\begin{figure}
\centering
\includegraphics[width=0.5\textwidth]{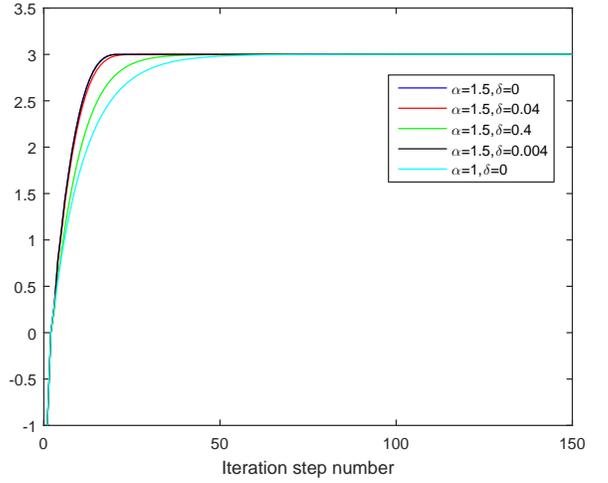}
\caption{Iteration results in Example \ref{ex3}}\label{f3}
\end{figure}
\begin{table}[hb]
\begin{center}
\caption{Some typical points with different $\delta$ in Example \ref{ex3}}\label{tab2}
\begin{tabular}{c|cccccc}
\hline
\backslashbox{$t_k-t^*$}{Step $k$} & 140 & 141 & 142 & 143 & 144 &145\\\hline
$\delta=0$ $(\times 10^{-3})$ & 1.3 &-1.3 & 1.3 & -1.3& 1.3 &-1.3\\\hline
$\delta=0.04$ & 0 &0 &0 & 0& 0 &0\\\hline
$\delta=0.4$ $(\times 10^{-10})$& 2.67 &2.25 &1.89 & 1.60& 1.34 &1.13\\\hline
$\delta=0.004$ $(\times 10^{-16})$& -4.44 &4.44&-4.44 &4.44& -4.44 &4.44\\\hline
\end{tabular}
\end{center}
\end{table}

Results are shown in Fig. \ref{f3} and TABLE \ref{tab2}. Following conclusions can be directly derived:
\begin{itemize}
    \item[1)] Smaller $\delta$ means a faster convergence rate. Moreover, if $\delta<1$, the convergence rate is always faster than the conventional GM, i.e., $\alpha=1$ as shown in Fig. \ref{f3}.
    \item[2)] If $\delta$ is selected too small like $\delta=0.004$ case, it cannot guarantee the asymptotical convergence but only improves the convergence accuracy. If $\delta$ is selected too large like $\delta=0.4$ case, it can guarantee the asymptotical convergence but the convergence rate is much slower.
    \item[3)] Our estimation for $\delta={\left| {\rho {\mu }} \right|^{\frac{1}{{\alpha  - 1}}}}$ can guarantee the asymptotical convergence with a satisfying convergence rate from Fig. \ref{f3} and TABLE \ref{tab2}, which validates the effectiveness of modified FOGM (\ref{fogm22}).
\end{itemize}
\end{example}

\begin{example}\label{ex4}
In this example, we will show the convergence rate in some extreme conditions. Consider the same function in Example \ref{ex1}. Here, different extreme points and different FOGMs are considered. If $0<\alpha<1$, then FOGM (\ref{fogm22}) with $\delta=0$ is used. If $1<\alpha<2$, then FOGM (\ref{fogm22}) with $\delta={\left| {\rho {\mu }} \right|^{\frac{1}{{\alpha  - 1}}}}$ is used. When it is mentioned switching FOGM, FOGM (\ref{fogm23}) is considered. Take $~t_1=0,~t_2=1,~\rho=0.01$ and gradient orders of the switching FOGM are $\alpha_1=0.7$ and $\alpha_2=1.3$ when simulating.

\begin{figure}
\centering
\includegraphics[width=0.5\textwidth]{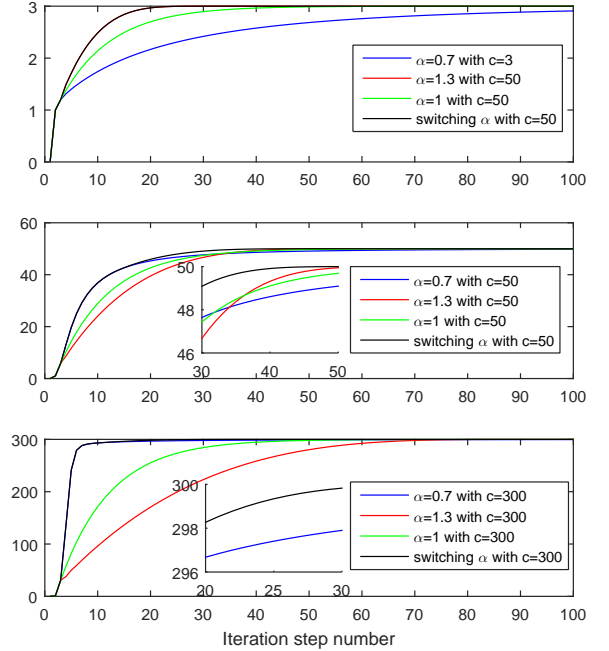}
\caption{Iteration results in Example \ref{ex4}}\label{f4}
\end{figure}

Results are shown in Fig. \ref{f4}. Following conclusions can be directly obtained:
\begin{itemize}
    \item[1)] FOGM (\ref{fogm22}) with $0<\alpha<1$ may show a faster convergence rate at the beginning in some situations resulting in a large $\Delta_k$, like a large step size $\rho$ or a large $f^{(1)}(t_k)$ as shown in the third sub-figure of Fig. \ref{f4}.
    \item[2)] Though FOGM (\ref{fogm22}) with $0<\alpha<1$ may show a faster convergence rate at the beginning, its convergence rate when $t_k$ is close to $t^*$ is rather worse, which is shown in the first sub-figure of Fig. \ref{f4}.
    \item[3)] Switching FOGM (\ref{fogm23}) shows a satisfying convergence rate both at the beginning or when $t_k$ is close to $t^*$. When $|\Delta_k|<1$ holds all the time, FOGM (\ref{fogm23}) shows the same convergence rate as FOGM (\ref{fogm22}) with $1<\alpha<2$ as shown in the first sub-figure of Fig. \ref{f4}.
\end{itemize}
\end{example}
\begin{example}\label{ex5}
In this example, we will compare the convergence property of different FOGMs. Consider the same function in Example \ref{ex1}. Different step sizes $\rho$ are considered. Take $t_1=0,~t_2=1$, $c=100$, and switching gradient orders are $\alpha_1=0.7$ and $\alpha_2=1.3$ when simulating.

\begin{figure}
\centering
\includegraphics[width=0.5\textwidth]{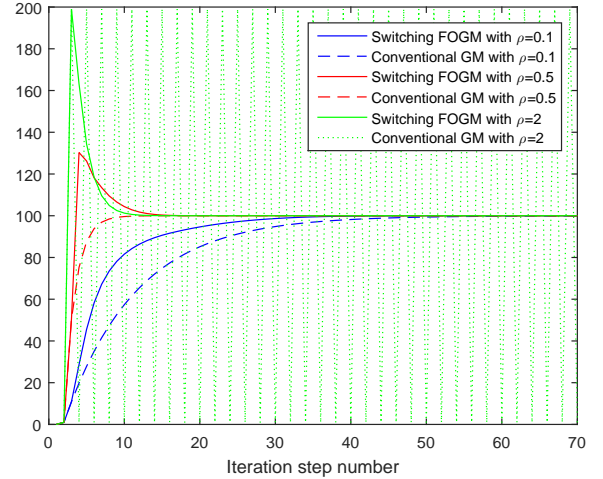}
\caption{Iteration results in Example \ref{ex5}}\label{f5}
\end{figure}

Results are shown in Fig. \ref{f5}. Following conclusions can be derived:
\begin{itemize}
    \item[1)] For FOGM (\ref{fogm23}), a larger $\rho$ does not mean a faster convergence rate. Thus a suitable $\rho$ will be the one with which $t_k$ does not go across $t^*$ significantly.
    \item[2)] FOGM (\ref{fogm23}) shows a great convergence property for arbitrary step size $\rho$, which demonstrates the conclusion of Theorem \ref{theorem5}. Yet, the conventional GM will be divergent if $\rho>2$ as shown in Fig. \ref{f5}.
    \item[3)] Here, we do not show the convergence property of FOGM (\ref{fogm21}) with $0<\alpha<1$. It is even worse than the conventional case since one can validate that FOGM (\ref{fogm21}) with $0<\alpha<1$ is divergent with $\rho>1.1$.
\end{itemize}
\end{example}

\begin{example}\label{ex6}
Consider a special convex function $f(t)=|t-c|^{\frac{4}{3}}$, which is not strong convex or satisfying Lipschitz continuous gradient globally. Yet, one can validate that $|f^{(1)}(x)-f^{(1)}(y)|\le |x-y|^{0.4}$ holds for $x,~y\in\{|t|>1000\}$. Take $t_1=-1,~t_2=0$, and $c=100$ when simulating.
\begin{figure}
\centering
\includegraphics[width=0.5\textwidth]{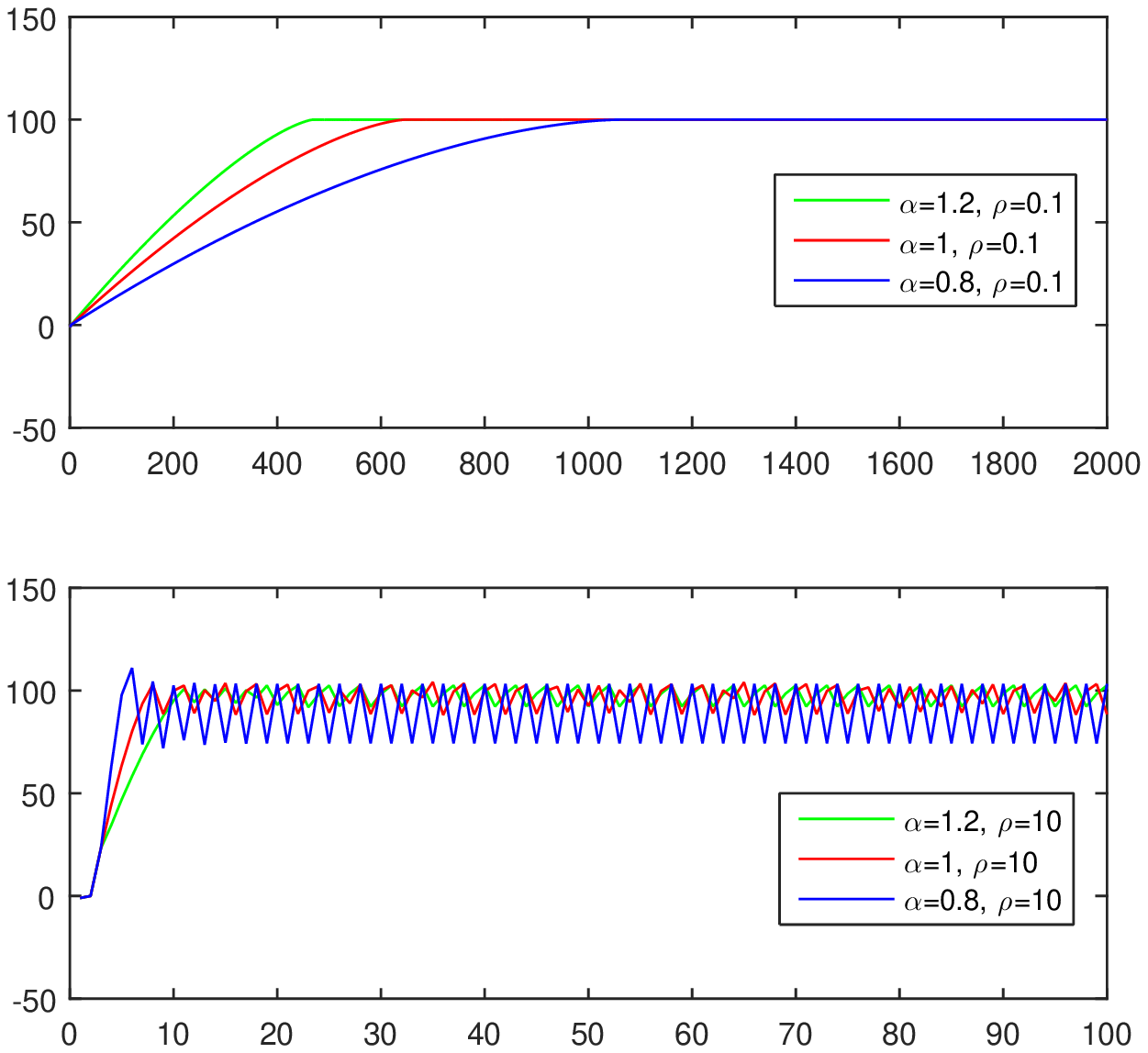}
\caption{Iteration results in Example \ref{ex6}}\label{f6}
\end{figure}

Results are shown in Fig. \ref{f6}. It is concluded that FOGM (\ref{fogm21}) with $0.4<\alpha<1.4$ will never go divergent but converge to a bounded region around $t^*$, which validates the conclusion of Theorem \ref{theo51}. Moreover, the conventional GM with $\alpha=1$ still cannot guarantee an asymptotical convergence, which implies that the order of strong convexity for $f(t)=|t-c|^{\frac{4}{3}}$ is less than $1$ or more accurately $0.8$.

Though the accurate order of local strong convexity around $t^*$ is tough to determine, we can find the approximate order by simulation. Take $t_1=-1,~t_2=0$,and $\rho=2$ when simulating. Results are shown in Fig. \ref{f62} and TABLE \ref{tab3}. With $\alpha=0.332$, $t_k$ can converge to the extreme point asymptotically. Yet, with $\alpha=0.334$, $t_k$ only converges to a bounded region of the extreme point. Thus the approximate order of strong convexity is $\frac{1}{3}$.
%\begin{remark}
%For the traditional definition of strong convexity, it can be equivalently described as $f^{(2)}(x)\ge L$ for any $x$ belonging to the definition domain. Similarly, if we define the $p$-order strong convexity as $f^{(1+p)}(x)\ge L$ for any $x$ belonging to the definition domain, then o
%\end{remark}

\begin{figure}
\centering
\includegraphics[width=0.5\textwidth]{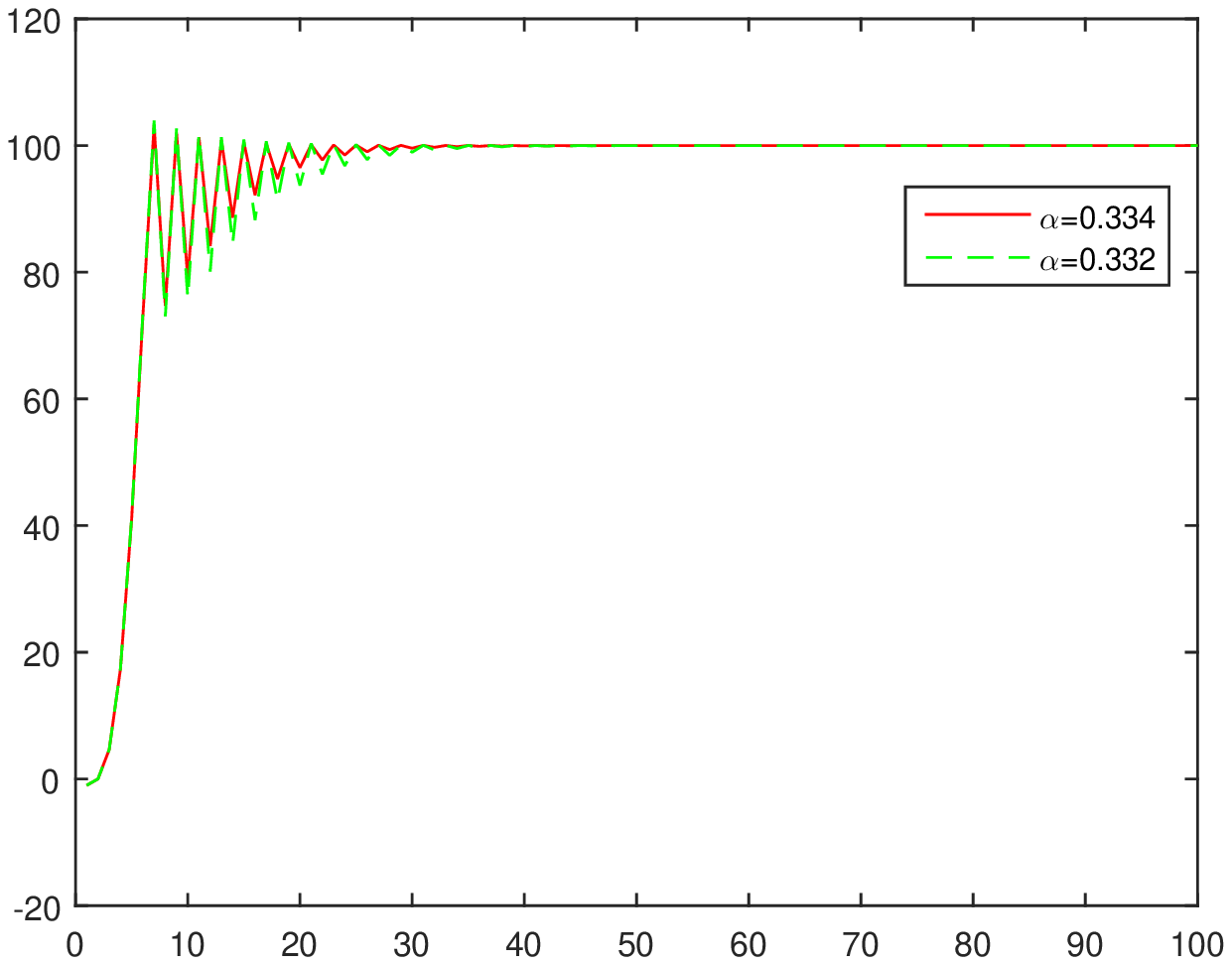}
\caption{Iteration results in Example \ref{ex6}}\label{f62}
\end{figure}

\begin{table}[!htb]
\begin{center}
\caption{Some typical points with different $\delta$ in Example \ref{ex6}}\label{tab3}
\begin{tabular}{c|cccccc}
\hline
\backslashbox{$t_k-t^*$}{Step $k$} &455& 456 & 457 & 458 & 459 & 460 \\\hline

$\alpha=0.334$ $(\times 10^{-14})$& 2.84 &-22.7 &2.84 & -22.7& 2.84 &-22.7\\\hline

\backslashbox{$t_k-t^*$}{Step $k$} &155& 156 & 157 & 158 & 159 & 160 \\\hline
$\alpha=0.332$ $(\times 10^{-13})$ &-2.56 & 14.2 &-1.84 &0 & 0& 0 \\\hline
\end{tabular}
\end{center}
\end{table}

\end{example}
\section{Conclusion}\label{sec6}
In this paper, we carefully analyze the convergence capability, convergence accuracy, and convergence rate of a novel FOGM. Due to the special properties of FOGM with $0<\alpha<1$ and $1<\alpha<2$, a switching FOGM is proposed, which shows superiorities in both convergence rate and convergence capability. Moreover, we extend the conventional concepts of Lipschitz continuous gradient and strong convex to a more general case and all the proposed conclusion are extended to a more general case. Finally, numerous simulation examples demonstrates the effectiveness of proposed methods fully. A promising future topic can be directed to apply the proposed FOGM in some related fields like LMS filter and system identification.

\bibliographystyle{IEEEtran}
\bibliography{IEEEabrv,FOEP}

%% Authors are advised to submit their bibtex database files. They are
%% requested to list a bibtex style file in the manuscript if they do
%% not want to use model5-names.bst.

%% References without bibTeX database:

% \begin{thebibliography}{00}

%% \bibitem must have one of the following forms:
%%   \bibitem[Jones et al.(1990)]{key}...
%%   \bibitem[Jones et al.(1990)Jones, Baker, and Williams]{key}...
%%   \bibitem[Jones et al., 1990]{key}...
%%   \bibitem[\protect\citeauthoryear{Jones, Baker, and Williams}{Jones
%%       et al.}{1990}]{key}...
%%   \bibitem[\protect\citeauthoryear{Jones et al.}{1990}]{key}...
%%   \bibitem[\protect\astroncite{Jones et al.}{1990}]{key}...
%%   \bibitem[\protect\citename{Jones et al., }1990]{key}...
%%   \harvarditem[Jones et al.]{Jones, Baker, and Williams}{1990}{key}...
%%

% \bibitem[ ()]{}

% \end{thebibliography}

\end{document}